\providecommand{\bysame}			    
{\makebox[3em]{\hrulefill}\thinspace}	
\newtheorem{definition}{Definiton}[section]
\newtheorem{remark}{Remark}[section]
\newtheorem{proposition}{Proposition}[section]
\newtheorem{lemma}{Lemma}[section]
\newtheorem{theorem}{Theorem}[section]
\newtheorem{corollary}{Corollary}[section]
\newcommand{\RP}{{\mathbb{RP}^1}}
\newcommand{\hap}{ H_{\alpha,p} }
\begin{document}

\author{David Damanik\footnote{damanik@rice.edu, D.\ D.\ was supported in part by NSF grant DMS--1700131 and by an Alexander von Humboldt Foundation research award.} , \,Rafael del Rio\footnote{delriomagia@gmail.com, R.\ R.\ was partially supported by project PAPIIT IN 110818} \, and Asaf L. Franco \footnote{asaflevif@gmail.com, A.\ L. \ F. was supported by CONACYT and  PAPIIT IN 110818 }\\
Department of Mathematics, Rice University$^*$\\
Houston, TX 77005, USA\\
\hfill\\
IIMAS - UNAM$^{\dagger\ddagger}$\\
Circuito escolar, Ciudad universitaria 04510 CDMX, M\'exico
}

\title{Random Sturm-Liouville Operators with Generalized Point Interactions}
\date{}

\maketitle

\begin{abstract}
In this work we study the point spectra of selfadjoint Sturm-Liouville operators with generalized point interactions, where the two one-sided limits of the solution data are related via a general $\mathrm{SL}(2,\mathbb{R})$ matrix. We are particularly interested in the stability of eigenvalues with respect to the variation of the parameters of the interaction matrix. As a particular application to the case of random generalized point interactions we establish a version of Pastur's theorem, stating that except for degenerate cases, any given energy is an eigenvalue only with probability zero. For this result, independence is important but identical distribution is not required, and hence our result extends Pastur's theorem from the ergodic setting to the non-ergodic setting.
\end{abstract}

Mathematics Subject Classification (MSC2010): 34L05, 47E05, 47N99.

\section{Introduction}\label{s.1}

In this paper we study the point spectra of selfadjoint Sturm-Liouville operators with generalized point interactions. More specifically, we investigate whether varying the parameters of the spectral problem preserves or destroys the fact that a given energy is an eigenvalue. This is of particular interest in the setting of random parameters. In the case of i.i.d.\ random variables, one can use methods from ergodic theory and it is a classical result due to Pastur \cite{P80} that a given energy can be an eigenvalue only with probability zero. However, if the random variables are not identically distributed, Pastur's argument does not apply and it was realized only recently, in the special case of $\delta$ and $\delta'$ point interactions, that a result in the same spirit still holds \cite{RRAL}.

\medskip

The purpose of the present paper is two-fold. On the one hand, we introduce a new approach to this problem, which is based on geometric ideas and mapping properties of $\mathrm{SL}(2,\mathbb{R})$ matrices. This makes the resulting spectral statement particularly natural and easy to understand. On the other hand, our approach allows us to generalize the setting and pass from $\delta$ and $\delta'$ point interactions to the whole class of real connecting selfadjoint point interactions and hence develops the theory in the appropriate level of generality.

\medskip

The key idea will be the following. Fixing the boundary conditions of the spectral problem and considering an energy that is an eigenvalue for a given collection of parameters, we vary one of them while keeping the others fixed. How to vary the parameter is clear if $\delta$ or $\delta'$ point interactions are considered, but it is somewhat less clear in the case of general $\mathrm{SL}(2,\mathbb{R})$ matrices connecting the left- and right-limit of the solution data at the point in question. To this end, we will consider the Iwasawa decomposition of an $\mathrm{SL}(2,\mathbb{R})$ matrix, which expresses it as a canonical product of a parabolic, a hyperbolic, and an elliptic factor. This provides the parameters we seek and will vary. The next step is to investigate the stability question for the eigenvalue problem at hand when the parameter is varied. It turns out in most cases that there is a dichotomy. Either the eigenvalue is present for all values of the parameter, or it is present only for the one we started with and not for any other value. To establish this dichotomy we look at the projective action of the $\mathrm{SL}(2,\mathbb{R})$ matrix in question and are able to exhibit this dichotomy via direct and very simple calculations. Once the dichotomy corresponding to a single point interaction has been established, it will then be straightforward to process the entire family and to deduce a global result. The application to the case of random parameters is then also immediate.

\medskip

Since they are crucial to our discussion, we will include discussions of the essential tools we use in Section~\ref{s.2}, even though this material is well known. We hope that this will be useful for those readers who are less familiar with these tools in the context of spectral theory applications. This includes in particular the Iwasawa decomposition of $\mathrm{SL}(2,\mathbb{R})$ matrices and their mapping properties on the real projective line. As a warm-up we consider the case of a single $\delta$ interaction in Section~\ref{s.3}. Although this case has been studied before, we present our new perspective in this simple setting, partly to introduce the ideas, and partly to show how the known result can be proved with our method. In Section~\ref{s.4} we then consider the case of a general connecting point interaction, which is given by an $\mathrm{SL}(2,\mathbb{R})$ matrix. The three parameters describing such a matrix are given, in our representation, by the parameters corresponding to the three factors in the Iwasawa decomposition of the given matrix. We discuss the stability question for a given eigenvalue when two of the three parameters are fixed and the third is varied. Next, Section~\ref{s.5} considers the case of countably many general point interactions located on a discrete set inside the interval. Again, only one parameter for one interaction will be varied, while all other parameters are fixed, and the eigenvalue stability problem is investigated. Finally, we consider the case of countably many general point interactions with \emph{random} parameters in Section~\ref{s.6} and prove a result in the spirit of Pastur and in the appropriate level of generality, that is, without assuming identical distribution. We do, however, make crucial use of independence.

\section*{Acknowledgments} Part of this work was done during a visit of D.D.\ in March 2019 to UNAM, which was partially supported by PAPIIT-UNAM.

\section{Preliminaries}\label{s.2}

In this section we collect a few tools, all of which are well known. As usual $\mathrm{SL}(2,\mathbb{R})$ and $\mathrm{GL}(2,\mathbb{R})$ denote the special and general linear groups respectively.
We include this material for the sake of the reader. Anyone familiar with these concepts may skip ahead to the next section.

\subsection{Transfer Matrices}

Let us discuss an elementary way to introduce the transfer matrices, which we emphasize is not the standard way of introducing them.

Consider an open interval $I = (a,b) \subset \mathbb{R}$, an $L^1_\mathrm{loc}$ potential $V : I \to \mathbb{R}$, and an energy $E \in \mathbb{R}$. The associated differential equation is
\begin{equation}\label{e.diffequ}
-u''(x) + V(x) u(x) = E u(x), \quad x \in I.
\end{equation}
Standard ODE theory shows that for each $x \in I$ and each $(v,d)^T \in \mathbb{R}^2$, there is a unique solution $u$ of \eqref{e.diffequ} with $(u(x),u'(x))^T = (v,d)^T$. Moreover, all real solutions of \eqref{e.diffequ} arise in this way. See for example \cite[Thm. 2.2.1]{AZ}. This has the following immediate consequence.

\begin{proposition}\label{l.solprojx}
The set $S_E$ of real solutions of \eqref{e.diffequ} is a two-dimensional real vector space and, for each $x \in I$, the map
$$
M_{x,E}: S_E \to \mathbb{R}^2, \quad u \mapsto \begin{pmatrix} u(x) \\ u'(x) \end{pmatrix}
$$
is a linear isomorphism.
\end{proposition}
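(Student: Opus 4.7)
The plan is to deduce everything from the existence-and-uniqueness statement already quoted from \cite[Thm.\ 2.2.1]{AZ}. The structure is standard: first confirm that $S_E$ is a real vector space, then fix $x \in I$, observe that $M_{x,E}$ is linear, and prove bijectivity using existence and uniqueness respectively. Dimension then falls out because $\mathbb{R}^2$ is two-dimensional.

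In more detail, I would proceed as follows. For the vector-space claim, note that equation \eqref{e.diffequ} is linear and homogeneous in $u$, so any real linear combination $\alpha u_1 + \beta u_2$ of real solutions is again a real solution; since the zero function is a trivial solution, $S_E$ is a linear subspace of the space of real functions on $I$. For the map $M_{x,E}$, linearity is immediate from the linearity of evaluation of $u$ and $u'$ at the fixed point $x$. Surjectivity is exactly the existence part of the quoted ODE result: given any $(v,d)^T \in \mathbb{R}^2$, there exists $u \in S_E$ with $u(x) = v$ and $u'(x) = d$, so $M_{x,E}(u) = (v,d)^T$. Injectivity is the uniqueness part: if $M_{x,E}(u) = (0,0)^T$, then $u$ solves \eqref{e.diffequ} with the same initial data at $x$ as the zero function, so $u \equiv 0$ on $I$. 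Together these give that $M_{x,E}$ is a linear isomorphism $S_E \to \mathbb{R}^2$, and in particular $\dim_\mathbb{R} S_E = 2$.

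Since the whole content of the statement is a direct restatement of the cited ODE theorem in slightly more structural language, there is no genuine obstacle; the only thing to be careful about is phrasing the two halves of the bijectivity argument correctly and making explicit that the real scalar field is being used throughout (so that we really get a two-dimensional \emph{real} vector space, not a one-dimensional complex one coming from the complex version of the same ODE result). I would therefore keep the proof short, essentially a three- or four-line argument that points back to \cite[Thm.\ 2.2.1]{AZ} for both surjectivity and injectivity.
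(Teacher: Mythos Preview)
Your proposal is correct and follows essentially the same approach as the paper: both deduce linearity of $M_{x,E}$ directly from the definition, then invoke the existence and uniqueness parts of the cited ODE result for surjectivity and injectivity respectively, and read off $\dim S_E = 2$ from the resulting isomorphism with $\mathbb{R}^2$. Your write-up is slightly more explicit in separating the two halves of bijectivity, but the argument is the same.
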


\begin{proof}
It follows directly from the definition of the map $M_{x,E}$ (and the linearity of differentiation) that it is linear. By the standard ODE results quoted above, it is both onto and one-to-one. This also implies the well-known fact that $S_E$ is a two-dimensional real vector space.
\end{proof}

\begin{proposition}\label{MSL}
For $x,y \in I$, there is a matrix $M(x,y;E) \in \mathrm{SL}(2,\mathbb{R})$ such that for every $u \in S_E$, we have
\begin{equation}\label{e.transfermatrix}
\begin{pmatrix} u(x) \\ u'(x) \end{pmatrix} = M(x,y;E) \begin{pmatrix} u(y) \\ u'(y) \end{pmatrix}.
\end{equation}
\end{proposition}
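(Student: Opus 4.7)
The plan is to construct $M(x,y;E)$ directly from the isomorphisms supplied by Proposition~\ref{l.solprojx}, and then verify membership in $\mathrm{SL}(2,\mathbb{R})$ via the constancy of the Wronskian.

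First, I would define
$$
M(x,y;E) := M_{x,E} \circ M_{y,E}^{-1} : \mathbb{R}^2 \to \mathbb{R}^2.
$$
Since both $M_{x,E}$ and $M_{y,E}$ are linear isomorphisms between the two-dimensional spaces $S_E$ and $\mathbb{R}^2$ by Proposition~\ref{l.solprojx}, this composition is a linear isomorphism of $\mathbb{R}^2$, hence represented by a matrix in $\mathrm{GL}(2,\mathbb{R})$. The transport identity \eqref{e.transfermatrix} is then immediate: for any $u \in S_E$, applying $M_{y,E}^{-1}$ to $(u(y),u'(y))^T$ recovers $u$, and then $M_{x,E}$ returns $(u(x),u'(x))^T$.

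The remaining (and only substantive) task is to show $\det M(x,y;E) = 1$. For this I would introduce the two distinguished solutions $u_1, u_2 \in S_E$ determined by the initial data
$$
\begin{pmatrix} u_1(y) \\ u_1'(y)\end{pmatrix} = \begin{pmatrix} 1 \\ 0 \end{pmatrix}, \qquad \begin{pmatrix} u_2(y) \\ u_2'(y)\end{pmatrix} = \begin{pmatrix} 0 \\ 1 \end{pmatrix}.
$$
By construction, the columns of $M(x,y;E)$ are precisely $(u_1(x),u_1'(x))^T$ and $(u_2(x),u_2'(x))^T$, so $\det M(x,y;E)$ equals the Wronskian $W(u_1,u_2)(x) = u_1(x)u_2'(x)-u_2(x)u_1'(x)$. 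A direct differentiation using \eqref{e.diffequ} gives
$$
\frac{d}{dx}W(u_1,u_2)(x) = u_1(x)u_2''(x) - u_2(x)u_1''(x) = (V(x)-E)\bigl(u_1(x)u_2(x)-u_2(x)u_1(x)\bigr) = 0,
$$
so $W(u_1,u_2)$ is constant on $I$; evaluating at $y$ yields the value $1$.

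The main obstacle is really only the justification that $W$ is well defined and constant for $V \in L^1_{\mathrm{loc}}$; since the solutions are absolutely continuous with absolutely continuous derivatives under this regularity, the differentiation above is valid almost everywhere and the resulting derivative vanishes a.e., giving constancy of the absolutely continuous function $W(u_1,u_2)$. Combining this with the previous step gives $\det M(x,y;E) = 1$ and hence $M(x,y;E) \in \mathrm{SL}(2,\mathbb{R})$, completing the proof.
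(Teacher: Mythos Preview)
Your proof is correct and follows essentially the same route as the paper's own argument: define $M(x,y;E) = M_{x,E} \circ M_{y,E}^{-1}$, identify its columns with the two solutions normalized at $y$, and use constancy of the Wronskian to obtain determinant one. Your added remark about the a.e.\ justification of the Wronskian computation under the $L^1_{\mathrm{loc}}$ hypothesis is a slight refinement over the paper's presentation.
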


\begin{proof}
If we define $M(x,y;E) := M_{x,E} M_{y,E}^{-1}$, then \eqref{e.transfermatrix} holds by Proposition~\ref{l.solprojx}. By construction, $M(x,y;E) \in \mathrm{GL}(2,\mathbb{R})$, so it remains to show that $\det M(x,y;E) = 1$.

Consider the two solutions $u_D, u_N \in S_E$ with
$$
\begin{pmatrix} u_N(y) & u_D(y) \\ u'_N(y) & u'_D(y) \end{pmatrix} = \begin{pmatrix} 1 & 0 \\ 0 & 1 \end{pmatrix}.
$$
Then,
\begin{align*}
M(x,y;E) & = M(x,y;E) \begin{pmatrix} 1 & 0 \\ 0 & 1 \end{pmatrix} \\
& = M(x,y;E) \begin{pmatrix} u_N(y) & u_D(y) \\ u'_N(y) & u'_D(y) \end{pmatrix} \\
& = \begin{pmatrix} u_N(x) & u_D(x) \\ u'_N(x) & u'_D(x) \end{pmatrix},
\end{align*}
and therefore
\begin{align*}
\det M(x,y;E) & = \det \begin{pmatrix} u_N(x) & u_D(x) \\ u'_N(x) & u'_D(x) \end{pmatrix} \\
& = u_N(x) u'_D(x) - u_D(x) u'_N(x) \\
& = u_N(y) u'_D(y) - u_D(y) u'_N(y) \\
& = 1.
\end{align*}
Here we used the constancy of the Wronskian, which follows from the fact that $u_D, u_N$ solve \eqref{e.diffequ}:
\begin{align*}
(u_N(t) u'_D(t) & - u_D(t) u'_N(t))' = \\
& = u_N'(t) u'_D(t) + u_N(t) u''_D(t) - u_D'(t) u'_N(t) - u_D(t) u''_N(t)\\
& = u_N(t) [(V(t) - E)u_D(t)] - u_D(t) [(V(t) - E)u_N(t)] \\
& = 0.
\end{align*}
\end{proof}

\subsection{The Real Projective Line}

Recall that the real projective line $\RP$ is given by
$$
\RP = \{ \text{lines in $\mathbb R^2$ through the origin} \}.
$$
Note that the elements of $\RP$ are equivalence classes with respect to the equivalence relation on $\mathbb R^2 \setminus \{ 0 \}$ given by
$$
v \sim w \quad \Leftrightarrow \quad \exists \lambda \in \mathbb R \setminus \{0\} : v = \lambda w.
$$

\begin{definition}\label{eqqclass}
We denote the equivalence class of $v \in \mathbb R^2 \setminus \{ 0 \}$ by $[v]$.
\end{definition}

\begin{remark}\label{linearg}
Let $u=(u_1,u_2)^T$ and $v=(v_1,v_2)^T$. Then,
$[u]=[v]$ if and only if $arg(u_2+iu_1)=arg(v_2+iv_1 )+k\pi$, $k\in\mathbb{Z}$.
\end{remark}

\begin{lemma}\label{DMM}
Any $M \in \mathrm{GL}(2,\mathbb R)$ induces a well-defined bijective map from $\RP$ to $\RP$, which will be denoted by $\tilde M$, via
$$
\tilde M ([v]) = [Mv].
$$
\end{lemma}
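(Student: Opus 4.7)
The plan is to verify in turn that the assignment $[v] \mapsto [Mv]$ (i) is independent of the chosen representative of $[v]$, (ii) actually lands in $\RP$, and (iii) is a bijection. All three points reduce to the invertibility of $M$, so no deep idea is needed; this is purely a consistency check for a group action of $\mathrm{GL}(2,\mathbb R)$ on $\RP$.

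For well-definedness, I would take two representatives $v, w \in \mathbb R^2 \setminus \{0\}$ with $[v] = [w]$, so that $v = \lambda w$ for some $\lambda \in \mathbb R \setminus \{0\}$. By linearity, $Mv = \lambda Mw$, and since $\lambda \ne 0$ this shows $[Mv] = [Mw]$ in the sense of Definition~\ref{eqqclass}. To see that $[Mv]$ is actually an element of $\RP$, I need $Mv \ne 0$; this is where $M \in \mathrm{GL}(2,\mathbb R)$ enters, because $\det M \ne 0$ implies $\ker M = \{0\}$, so $v \ne 0$ forces $Mv \ne 0$.

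For bijectivity I would argue both directions directly. For injectivity, suppose $\tilde M([v]) = \tilde M([w])$, i.e., $[Mv] = [Mw]$. Then $Mv = \mu Mw$ for some $\mu \in \mathbb R \setminus \{0\}$, hence $M(v - \mu w) = 0$, and invertibility of $M$ yields $v = \mu w$, so $[v] = [w]$. For surjectivity, given $[u] \in \RP$, set $v := M^{-1} u$; then $v \ne 0$ because $u \ne 0$, and $\tilde M([v]) = [M M^{-1} u] = [u]$.

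There is no real obstacle here — the only thing one has to remember is that the equivalence relation on $\mathbb R^2 \setminus \{0\}$ does not include the zero vector, which is precisely what makes the nonvanishing check in step (ii) necessary, and this is the only place where invertibility (as opposed to mere linearity) of $M$ is genuinely used to define $\tilde M$ at all; the same fact is then reused for injectivity and surjectivity.
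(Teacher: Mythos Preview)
Your proof is correct and follows essentially the same approach as the paper: verify well-definedness via $v=\lambda w \Rightarrow Mv=\lambda Mw$, injectivity via $Mv=\mu Mw \Rightarrow v=\mu w$ using invertibility of $M$, and surjectivity via the preimage under $M$. You add the explicit check that $Mv\neq 0$ (so $[Mv]\in\RP$), which the paper leaves implicit, but otherwise the arguments are identical.
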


\begin{proof}
Let $u\sim v$. Then $u = \lambda v$ for some $\lambda \in \mathbb R \setminus \{ 0 \}$ and
$$
[M u] = \tilde M [u] = \tilde M [\lambda v] = [M \lambda v] = [\lambda M v] = [Mv].
$$
This shows that $\tilde M$ is well defined.

Let $[v] \in \RP$ with representative $v$. Since $M$ is surjective by assumption, there exists $u \in \mathbb R^2$ such that $Mu = v$. Since
$$
\tilde M ([u]) = [Mu] = [v],
$$
it follows that $\tilde M$ is surjective.\\

Finally, suppose $[Mu] = [Mv]$. Then there exists $k \in \mathbb R \setminus \{ 0 \}$ such that $Mu = k Mv$, and since $M$ is injective by assumption, $u = kv$. Thus $[u]=[v]$ and $\tilde M$ is injective.
\end{proof}

\subsection{The Iwasawa Decomposition of $\mathrm{SL}(2,\mathbb R)$ Matrices}

In this subsection we discuss the Iwasawa decomposition of $\mathrm{SL}(2,\mathbb R)$ matrices; compare \cite{L85}. We provide some details on how to obtain this decomposition for the reader's convenience.

\medskip

We define the following subgroups of $\mathrm{SL}(2,\mathbb R)$:
\begin{align*}
\mathcal{E} & = \left \{ E_\theta := \begin{pmatrix} \cos \theta & -\sin \theta \\ \sin \theta & \cos \theta \end{pmatrix} : \theta \in \mathbb{R} \right\}, \\
\mathcal{P} & = \left \{ P_\alpha := \begin{pmatrix} 1 & \alpha \\ 0 & 1 \end{pmatrix} : \alpha \in \mathbb R \right\}, \\
\mathcal{H} & = \left \{ H_r := \begin{pmatrix} r & 0 \\ 0 & 1/r \end{pmatrix} : r > 0 \right\}.
\end{align*}

\begin{theorem}[Iwasawa Decomposition]
Every $A\in \mathrm{SL}(2,\mathbb R)$ can be written in a unique way as $A = P_\alpha H_r E_\theta$, where $P_\alpha \in \mathcal{P}$, $H_r\in\mathcal{H}$ and $E_\theta \in\mathcal{E}$.
\end{theorem}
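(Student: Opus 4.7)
The key observation is that the three factors play complementary roles: the rotation $E_\theta$ can be read off from the direction of the bottom row of the product, the scaling $H_r$ from its norm, and the shear $P_\alpha$ encodes the remaining degree of freedom in the top row. So one produces the decomposition by peeling off the factors from right to left.

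For \emph{existence}, write $A = \begin{pmatrix} a & b \\ c & d \end{pmatrix}$ with $ad - bc = 1$. Since $P_\alpha$ acts by a row operation that leaves the second row fixed, the bottom row of any product $P_\alpha H_r E_\theta$ equals the bottom row of $H_r E_\theta$, namely $(\sin\theta/r,\ \cos\theta/r)$. This forces $c^2 + d^2 = 1/r^2$; because $\det A = 1$ rules out $(c,d)=(0,0)$, one sets $r := 1/\sqrt{c^2+d^2} > 0$ and takes $\theta$ to be the unique angle modulo $2\pi$ with $\sin\theta = cr$ and $\cos\theta = dr$. Next, form $B := A E_\theta^{-1}$. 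By construction its bottom row is $(0,\ 1/r)$, and a short computation using $ad - bc = 1$ shows that its $(1,1)$-entry reduces to $r$; hence $B$ is upper triangular with diagonal $(r, 1/r)$, and the $(1,2)$-entry of $B$ then uniquely defines $\alpha$ via $B = P_\alpha H_r$. This recovers $A = P_\alpha H_r E_\theta$.

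For \emph{uniqueness}, suppose $P_\alpha H_r E_\theta = P_{\alpha'} H_{r'} E_{\theta'}$. Rearranging yields $E_\theta E_{\theta'}^{-1} = H_r^{-1} P_{\alpha'-\alpha} H_{r'}$: the left-hand side is a rotation, while a direct multiplication shows the right-hand side is upper triangular with strictly positive diagonal entries $r'/r$ and $r/r'$. The only rotation matrix of this form is the identity, which forces $\cos(\theta-\theta')=1$ and $\sin(\theta-\theta')=0$, hence $\theta \equiv \theta' \pmod{2\pi}$; comparing the remaining entries then yields $r = r'$ and $\alpha = \alpha'$.

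The calculations involved are all elementary; the one point requiring care is keeping track of the sign and branch of $\theta$ when extracting it from the bottom row, and verifying that the $(1,1)$-entry of $A E_\theta^{-1}$ indeed collapses to $r$. This last step is precisely where the determinant condition $ad - bc = 1$ (as opposed to merely $A \in \mathrm{GL}(2,\mathbb{R})$) is essential, and it is the only genuine algebraic input beyond the trigonometric bookkeeping.
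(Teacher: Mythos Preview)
Your proof is correct and complete. The computations check out: in particular, the $(1,1)$-entry of $AE_\theta^{-1}$ is $a\cos\theta - b\sin\theta = a(dr) - b(cr) = r(ad-bc) = r$, exactly as you claim, and your uniqueness argument via ``rotation = upper-triangular with positive diagonal $\Rightarrow$ identity'' is clean.

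However, your route is genuinely different from the paper's. The paper argues via the M\"obius action of $\mathrm{SL}(2,\mathbb{R})$ on the upper half-plane $\mathbb{C}_+$: one first checks that the stabilizer of $i$ is precisely $\mathcal{E}$, then observes that $P_\alpha H_r \cdot i = \alpha + ir^2$, so setting $\alpha := \Re(A\cdot i)$ and $r := (\Im(A\cdot i))^{1/2}$ makes $(P_\alpha H_r)^{-1}A$ fix $i$ and hence lie in $\mathcal{E}$. Uniqueness follows by applying both decompositions to $i$ and comparing real and imaginary parts. Your approach instead peels off the factors by reading the bottom row of the matrix directly, never mentioning $\mathbb{C}_+$. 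The paper's method is more conceptual---it is the shadow of the general Iwasawa decomposition via the $K$-action on the symmetric space $G/K$---and explains \emph{why} the decomposition exists. Your method is more elementary and yields explicit formulas for $\alpha, r, \theta$ in terms of the matrix entries $a,b,c,d$, which could be useful computationally. Both are standard; the paper's choice fits its later use of projective and M\"obius actions.
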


\begin{proof}
Consider the complex upper half-plane, $\mathbb{C}_+ = \{ z \in \mathbb C : \Im z > 0 \}$. Given $A \in \mathrm{SL}(2,\mathbb R)$, we consider its action on $\mathbb C_+$ given by
$$
A \cdot z = \begin{pmatrix} a & b \\ c & d \end{pmatrix} \cdot z := \frac{az + b}{cz + d}.
$$
Note that $A \cdot z$ indeed belongs to $\mathbb C_+$ for each $z \in \mathbb C_+$ since
$$
\Im \left( \frac{az + b}{cz + d} \right) = \frac{(ad-bc) \Im z}{|cz + d|^2} = \frac{\Im z}{|cz + d|^2} > 0.
$$
Moreover, note that
\begin{equation}\label{e.multi}
(A \cdot B) \cdot z = A \cdot (B \cdot z)
\end{equation}
for all $A, B  \in \mathrm{SL}(2,\mathbb R)$ and $z \in \mathbb C_+$.

Consider the case $A \cdot i = i$, that is,
$$
\frac{ai + b}{ci + d} = i \Leftrightarrow ai + b = di - c \Leftrightarrow a = d \text{ and } b = -c.
$$
Thus the condition $\det A = ad - bc = 1$ becomes $a^2 + c^2 = 1$ and we can choose $\theta \in \mathbb R$ with $a = \cos \theta$ and $c = \sin \theta$, so that
$$
A = \begin{pmatrix} a & b \\ c & d \end{pmatrix} = \begin{pmatrix} a & -c \\ c & a \end{pmatrix} = \begin{pmatrix} \cos \theta & -\sin \theta \\ \sin \theta & \cos \theta \end{pmatrix}.
$$
This discussion shows that $A \cdot i = i$ if and only if $A \in \mathcal{E}$.

Given any $A \in \mathrm{SL}(2,\mathbb R)$, we consider $A \cdot i \in \mathbb C_+$ and set
$$
\alpha := \Re (A \cdot i), \quad r := (\Im (A \cdot i))^{1/2}.
$$
Then,
\begin{align*}
A \cdot i & = \alpha + i r^2 \\
& = \begin{pmatrix} r & \alpha/r \\ 0 & 1/r \end{pmatrix} \cdot i \\
& = \begin{pmatrix} 1 & \alpha \\ 0 & 1 \end{pmatrix}  \begin{pmatrix} r & 0 \\ 0 & 1/r \end{pmatrix}  \cdot i
\end{align*}
Thus, by \eqref{e.multi},
$$
\begin{pmatrix} r & 0 \\ 0 & 1/r \end{pmatrix}^{-1} \begin{pmatrix} 1 & \alpha \\ 0 & 1 \end{pmatrix}^{-1} A \cdot i = i,
$$
which implies that
$$
\begin{pmatrix} r & 0 \\ 0 & 1/r \end{pmatrix}^{-1} \begin{pmatrix} 1 & \alpha \\ 0 & 1 \end{pmatrix}^{-1} A = \begin{pmatrix} \cos \theta & -\sin \theta \\ \sin \theta & \cos \theta \end{pmatrix}
$$
for a suitable $\theta \in \mathbb R$ by our discussion above. Thus,
$$
A = \begin{pmatrix} 1 & \alpha \\ 0 & 1 \end{pmatrix} \begin{pmatrix} r & 0 \\ 0 & 1/r \end{pmatrix} \begin{pmatrix} \cos \theta & -\sin \theta \\ \sin \theta & \cos \theta \end{pmatrix},
$$
as desired. This establishes existence.

To show uniqueness, consider the identity
$$
\begin{pmatrix} 1 & \alpha_1 \\ 0 & 1 \end{pmatrix} \begin{pmatrix} r_1 & 0 \\ 0 & 1/r_1 \end{pmatrix} \begin{pmatrix} \cos \theta_1 & -\sin \theta_1 \\ \sin \theta_1 & \cos \theta_1 \end{pmatrix} = \begin{pmatrix} 1 & \alpha_2 \\ 0 & 1 \end{pmatrix} \begin{pmatrix} r_2 & 0 \\ 0 & 1/r_2 \end{pmatrix} \begin{pmatrix} \cos \theta_2 & -\sin \theta_2 \\ \sin \theta_2 & \cos \theta_2 \end{pmatrix}
$$
with $\alpha_1, \alpha_2, \theta_1, \theta_2 \in \mathbb R$ and $r_1, r_2 > 0$.

Applying both sides to $i \in \mathbb C_+$, we obtain
$$
\begin{pmatrix} 1 & \alpha_1 \\ 0 & 1 \end{pmatrix} \begin{pmatrix} r_1 & 0 \\ 0 & 1/r_1 \end{pmatrix} \cdot i = \begin{pmatrix} 1 & \alpha_2 \\ 0 & 1 \end{pmatrix} \begin{pmatrix} r_2 & 0 \\ 0 & 1/r_2 \end{pmatrix} \cdot i,
$$
which (by an observation above) is equivalent to
$$
\alpha_1 + i r_1^2 = \alpha_2 + i r_2^2.
$$
This implies $\alpha_1 = \alpha_2$ and $r_1 = r_2$ (since $r_1, r_2 > 0$). Once this holds, we must also have
$$
\begin{pmatrix} \cos \theta_1 & -\sin \theta_1 \\ \sin \theta_1 & \cos \theta_1 \end{pmatrix} = \begin{pmatrix} \cos \theta_2 & -\sin \theta_2 \\ \sin \theta_2 & \cos \theta_2 \end{pmatrix},
$$
proving uniqueness.
\end{proof}

\begin{remark}
Since any matrix in $\mathrm{SL}(2,\mathbb R)$ can be written as the inverse of the transpose of a matrix in $\mathrm{SL}(2,\mathbb R)$, we also have the decomposition
$$
A = \begin{pmatrix} 1 & 0 \\ -\tilde\alpha & 1 \end{pmatrix} \begin{pmatrix} \frac{1}{\tilde r} & 0 \\ 0 & \tilde r \end{pmatrix} \begin{pmatrix} \cos \tilde\theta & -\sin \tilde\theta \\ \sin \tilde\theta & \cos \tilde\theta \end{pmatrix}
$$
for some $\tilde\alpha\in\mathbb{R}$, $\tilde r>0$ and $\tilde \theta \in \mathbb{R}$.
\end{remark}

\subsection{The Differential Operator and its Eigenvalues}

For a finite closed interval $I=[a,b]$ and a real-valued $V \in L^1(I)$, consider the associated differential expression defined by
$$
\tau f := -f'' + Vf.
$$

For all $x,y \in I$, let $M(x,y;E)$ be the transfer matrix defined in Proposition~\ref{MSL}. Then $M(x,y;E) \in \mathrm{SL}(2,\mathbb{R})$ and for every real solution of $\tau u = Eu$, we have
\[
\left(\begin{array}{c}
      u(x) \\
     u'(x)
\end{array}\right)=M(x,y;E)\left(\begin{array}{c}
     u(y) \\
     u'(y)
\end{array}\right).
\]

Let $T_{\theta,\gamma}$ be the selfadjoint operator defined by
$$
T_{\theta,\gamma} f = \tau f
$$
with domain
$$
D(T_{\theta,\gamma}) := \{ f \in L^2(I): f, f' \mbox{abs. con. on} \, I, \tau f \in L^2(I)
$$
$$
f(a) \cos \theta - f'(a) \sin \theta = 0
$$
$$
f(b) \cos \gamma - f'(b) \sin \gamma = 0 \}.
$$

As an application of Lemma~\ref{DMM} we will prove the following well-known result.

\begin{theorem}
Let $E \in \mathbb R$, then for each $\theta \in [0,\pi)$ $(\gamma \in [0,\pi))$, there exists a unique $\gamma \in [0,\pi)$ $(\theta\in [0,\pi))$ such that $E \in \sigma_p(T_{\theta,\gamma})$.
\end{theorem}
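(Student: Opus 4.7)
The plan is to translate the statement into a question about points on $\mathbb{RP}^1$ and then invoke the bijectivity of the projective action, Lemma~\ref{DMM}. Since $I=[a,b]$ is compact and $V\in L^1(I)$, every classical solution of $\tau u = Eu$ lies automatically in $L^2(I)$ together with its first derivative in the required sense. Hence $E\in\sigma_p(T_{\theta,\gamma})$ iff there is a nonzero $u\in S_E$ satisfying both boundary conditions. So the theorem reduces to a purely linear-algebraic assertion about the transfer matrix $M(b,a;E)$ and the two one-parameter families of boundary lines.

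First, I would recast the boundary conditions in projective form. The condition $f(a)\cos\theta - f'(a)\sin\theta = 0$ is equivalent to
\[
\begin{pmatrix} f(a) \\ f'(a) \end{pmatrix} \in \bigl[\,(\sin\theta,\cos\theta)^T\,\bigr]\subset\mathbb{R}^2,
\]
and analogously at $b$ with $(\sin\gamma,\cos\gamma)^T$. Since $(\sin\theta,\cos\theta)^T\neq 0$ for every $\theta$, each boundary parameter really does select a point of $\mathbb{RP}^1$. The key observation to record next is that the map
\[
\Phi:[0,\pi)\to\mathbb{RP}^1,\qquad \theta\mapsto\bigl[\,(\sin\theta,\cos\theta)^T\,\bigr],
\]
is a bijection; this follows because any nonzero vector in $\mathbb{R}^2$ has, up to overall sign, a unique representative of unit length, whose angle is determined modulo $\pi$.

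Now comes the main step. Given $E\in\mathbb{R}$, let $M:=M(b,a;E)\in\mathrm{SL}(2,\mathbb{R})$ be the transfer matrix from $a$ to $b$ supplied by Proposition~\ref{MSL}. For any nonzero solution $u\in S_E$ with $(u(a),u'(a))^T\in\Phi(\theta)$, one has $(u(b),u'(b))^T = M(u(a),u'(a))^T$, so the right-hand endpoint data lies on the projective line $\tilde M(\Phi(\theta))$. Therefore the existence of a nonzero solution meeting both boundary conditions is equivalent to
\[
\tilde M\bigl(\Phi(\theta)\bigr)=\Phi(\gamma).
\]
By Lemma~\ref{DMM}, $\tilde M$ is a bijection of $\mathbb{RP}^1$, and by the observation above both $\Phi$ and its analogue for $\gamma$ are bijections from $[0,\pi)$ onto $\mathbb{RP}^1$. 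Hence, given $\theta\in[0,\pi)$, the equation above determines $\gamma\in[0,\pi)$ uniquely via $\gamma=\Phi^{-1}\bigl(\tilde M(\Phi(\theta))\bigr)$; symmetrically, applying $\tilde M^{-1}$ produces, for each prescribed $\gamma$, the unique admissible $\theta$.

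The only real thing to check carefully is the equivalence between existence of an eigenfunction and the projective identity $\tilde M(\Phi(\theta))=\Phi(\gamma)$; in particular one must observe that a solution with $(u(a),u'(a))^T\in\Phi(\theta)$ cannot vanish identically (since $\Phi(\theta)$ contains no zero vector and $M_{a,E}$ is an isomorphism by Proposition~\ref{l.solprojx}), and that if the projective identity holds then one can pick a genuine nonzero $u$ realizing the chosen left-endpoint line. I do not expect any serious obstacle beyond this bookkeeping; the content is essentially that self-adjoint boundary conditions are parameterized by $\mathbb{RP}^1$ and that $\tilde M$ is a bijection of this space.
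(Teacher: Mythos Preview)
Your proposal is correct and follows essentially the same approach as the paper: both arguments translate the boundary conditions into points of $\mathbb{RP}^1$ and then invoke the bijectivity of $\tilde M$ from Lemma~\ref{DMM}. The only cosmetic difference is that you package existence and uniqueness simultaneously via the explicit bijection $\Phi:[0,\pi)\to\mathbb{RP}^1$, whereas the paper first constructs $\gamma$ concretely via $\arctan$ and then argues uniqueness separately.
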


\begin{proof}
For $E \in \mathbb{R}$ and $\theta \in [0,\pi)$, there exists a non-trivial solution $u \in L^2(I)$ of $\tau u = E u$, which is unique up to a non-zero multiple, satisfying
$$
u(a) \cos \theta - u'(a) \sin \theta = 0.
$$
Since $M(b,a;E) \in \mathrm{SL}(2,\mathbb{R})$, there exists a unique vector $(u(b),u'(b))^T$ satisfying
\[
\left(\begin{array}{c}
      u(b) \\
     u'(b)
\end{array}\right) = M(b,a;E)\left(\begin{array}{c}
     u(a) \\
     u'(a)
\end{array}\right)
\]
Let $\gamma := \arctan \frac{u(b)}{u'(b)}$. Then,
$$
u(b) \cos \gamma - u'(b) \sin \gamma = 0.
$$
Therefore, $E \in \sigma_p(T_{\theta,\gamma})$.

Assume $\tilde \gamma \in [0,\pi)$, $\tilde \gamma \not= \gamma$ and $E \in \sigma_p(T_{\theta,\tilde\gamma})$. Then there exists a non-zero $v \in D(T_{\theta,\tilde\gamma})$ such that $\tau v = E v$,
$$
v(a) \cos \theta - v'(a) \sin \theta = 0,
$$
$$
v(b) \cos \tilde \gamma - v'(b) \sin \tilde \gamma = 0.
$$
Thus the angle of the vector $(v(a),v'(a))^T$ is $\theta$ and the angle of the vector $(v(b),v'(b))^T$ is $\tilde \gamma$. Then by Lemma \ref{DMM},
$$
\tilde M(a,b;E) [(v(b),v'(b))^T] = [(u(a),u'(a))^T] = \tilde M(a,b;E) [(u(b),u'(b))^T]
$$
the vectors $(v(b),v'(b))^T$ and $(u(b),u'(b))^T$ must belong to the same element of the real projective line, i.e.\ they must have the same angle, so that $\gamma = \tilde \gamma$.
Analogously, for each $\gamma \in [0,\pi)$, there exists a unique $\theta \in [0,\pi)$ such that $E \in \sigma_p (T_{\theta,\gamma})$.
\end{proof}

\begin{corollary}
If $E\in \sigma_p(T_{\theta,\gamma})$, then $E \not\in \sigma_p (T_{\tilde\theta,\gamma})$ for every $\tilde \theta \in [0,\pi) \setminus \{ \theta \}$.
\end{corollary}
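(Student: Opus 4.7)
The plan is to observe that this corollary is essentially the contrapositive of the uniqueness assertion already contained in the preceding theorem, restricted to the situation in which we fix the boundary parameter $\gamma$ at the right endpoint and vary the boundary parameter $\theta$ at the left endpoint. So the proof is just a short logical rearrangement rather than a fresh argument, and there is no serious obstacle to overcome.

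More concretely, I would argue by contradiction. Suppose $E \in \sigma_p(T_{\theta,\gamma})$ and, for the sake of contradiction, there exists some $\tilde\theta \in [0,\pi) \setminus \{\theta\}$ with $E \in \sigma_p(T_{\tilde\theta,\gamma})$ as well. The preceding theorem asserts that for any $E \in \mathbb{R}$ and any $\gamma \in [0,\pi)$, there is a \emph{unique} $\theta' \in [0,\pi)$ for which $E \in \sigma_p(T_{\theta',\gamma})$. But the hypothesis and the contradiction assumption together exhibit two distinct values $\theta,\tilde\theta \in [0,\pi)$ with this property, contradicting uniqueness. Hence no such $\tilde\theta$ can exist.

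If one wished to make the geometric content transparent rather than just citing the theorem, one could retrace the projective-line argument: any eigenfunction $v$ of $T_{\tilde\theta,\gamma}$ at energy $E$ satisfies $[(v(b),v'(b))^T] = [(\sin\gamma,\cos\gamma)^T] \in \RP$, which via Lemma~\ref{DMM} determines $[(v(a),v'(a))^T] = \tilde M(a,b;E)[(\sin\gamma,\cos\gamma)^T]$ uniquely as an element of $\RP$, and that element corresponds to exactly one angle in $[0,\pi)$. Since the same calculation applied to an eigenfunction $u$ of $T_{\theta,\gamma}$ yields the same projective class at $a$, the two angles $\theta$ and $\tilde\theta$ must coincide. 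The only step requiring any care is the translation between projective classes and angles in $[0,\pi)$, which is handled by Remark~\ref{linearg}; beyond that, the argument is automatic.
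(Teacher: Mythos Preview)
Your proposal is correct and matches the paper's treatment: the paper states the corollary without proof, as it is an immediate consequence of the uniqueness assertion in the preceding theorem, which is precisely what you invoke. Your optional retracing of the projective-line argument is also faithful to the theorem's own proof.
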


\section{The Case of a Single $\delta$-Interaction}\label{s.3}

As a warm-up we consider the case of a single $\delta$-interaction.

Let $I = [a,b] \subset \mathbb R$ be a closed finite interval, $V \in L^1(I)$ real valued, $p \in J$ an interior point, and $\alpha \in \mathbb{R}$.

We consider the formal differential expressions
$$
\tau := -\frac{d^2}{dx^2} + V
$$
and
$$
\tau_{\alpha,p} := -\frac{d^2}{dx^2} + V + \alpha \delta (x-p).
$$

The maximal operator $T_{\alpha,p}$ corresponding to $\tau_{\alpha,p}$ is defined by
$$
T_{\alpha,p} f = \tau f
$$
$$
D(T_{\alpha,p}) = \Big\{ f \in L^2(I) : \,f,\,f'\mbox{ abs. cont in } J \backslash \{p\}, -f'' + V f \in L^2(J),
$$
\[
\left(\begin{array}{c}
     f(p+) \\
     f'(p+)
\end{array}\right)=A_{\alpha,p}\left(\begin{array}{c}
f(p-)\\
f'(p-)\end{array}\right) \Big\}.
	\]
	
Here, $A_{\alpha,p}$ is the $\mathrm{SL}(2,\mathbb{R})$ matrix defined by
\begin{equation}\label{e.deltainteraction}
A_{\alpha,p} = \left(\begin{array}{cc} 1 & 0 \\ \alpha & 1 \end{array}\right).
\end{equation}

Let us consider the selfadjoint restriction $H_{\alpha,p}$ of $T_{\alpha,p}$ in $L^2(I)$, see Theorem 5.2 in \cite{BSW}, defined by
\begin{equation}\label{halfa}
H_{\alpha,p} f = \tau f
\end{equation}
\[
\begin{array}{ccc} D(H_{\alpha,p}) & = & \left\{f\in D(T_{\alpha,p}) : \begin{array}{c} f(a) \cos \theta + f'(a) \sin \theta = 0 \\	 {f(b) \cos \gamma + f'(b) \sin \gamma = 0} \end{array} \right\} \qquad \qquad  \theta,\,\gamma\in [0,\pi). \end{array}
\]
	
\begin{theorem}
Let $E \in \sigma_p(H_{\alpha,p})$. Then one of the following holds:
\begin{itemize}
    \item [$i)$] $E \in \sigma_p (H_{\tilde \alpha, p})$ for every $\tilde \alpha \in \mathbb{R}$,
    \item[$ii)$] $E \not\in \sigma_p (H_{\tilde \alpha, p})$ for every $\tilde \alpha \in \mathbb{R} \setminus \{ \alpha \}$.
\end{itemize}
\end{theorem}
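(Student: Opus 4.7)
The plan is to recast the eigenvalue condition at $E$ as a matching condition in $\RP$ for two vectors at the point $p$ that are built from pieces that do not depend on $\alpha$, and then to read off the dichotomy directly from the very explicit form of the orbit $\{A_{\tilde\alpha,p} v : \tilde\alpha\in\mathbb R\}$.

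First I would fix $E \in \mathbb R$ and consider the initial value problems for $\tau u = Eu$ on $[a,p]$ with the left boundary condition and on $[p,b]$ with the right boundary condition. By standard ODE theory each admits a one-parameter family of solutions, so we obtain (up to scalar multiples) a nonzero real solution $u_L$ on $[a,p]$ satisfying $u_L(a)\cos\theta + u_L'(a)\sin\theta = 0$, and a nonzero real solution $u_R$ on $[p,b]$ satisfying $u_R(b)\cos\gamma + u_R'(b)\sin\gamma = 0$. Crucially, neither $u_L$ nor $u_R$ depends on the interaction parameter, since that parameter only enters through the jump condition at $p$. Setting
\[
v := \begin{pmatrix} u_L(p-) \\ u_L'(p-) \end{pmatrix}, \qquad w := \begin{pmatrix} u_R(p+) \\ u_R'(p+) \end{pmatrix},
\]
which are both nonzero by Proposition~\ref{l.solprojx}, I would observe that $E \in \sigma_p(H_{\tilde\alpha,p})$ if and only if one can concatenate (nonzero scalar multiples of) $u_L$ and $u_R$ into an eigenfunction, which in turn holds if and only if $A_{\tilde\alpha,p} v$ and $w$ are linearly dependent, i.e.\
\[
\bigl[ A_{\tilde\alpha,p} v \bigr] = [w] \quad \text{in } \RP.
\]

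Next, I would use the explicit shape of $A_{\tilde\alpha,p}$ from \eqref{e.deltainteraction} to compute
\[
A_{\tilde\alpha,p} v = \begin{pmatrix} v_1 \\ \tilde\alpha\, v_1 + v_2 \end{pmatrix},
\]
and split on the value of $v_1 = u_L(p-)$. If $v_1 = 0$, then $A_{\tilde\alpha,p} v = v$ for every $\tilde\alpha\in\mathbb R$, so the condition $[A_{\tilde\alpha,p} v] = [w]$ is independent of $\tilde\alpha$; since it holds for the given $\alpha$ by hypothesis, it holds for all $\tilde\alpha$, yielding alternative $(i)$. If $v_1 \neq 0$, then the map $\tilde\alpha \mapsto [A_{\tilde\alpha,p} v]$ is injective, because the vectors $(v_1,\tilde\alpha v_1 + v_2)^T$ all share the same nonzero first component while their second component ranges bijectively over $\mathbb R$ as $\tilde\alpha$ varies, so they represent pairwise distinct lines through the origin. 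Since the matching condition holds for $\tilde\alpha = \alpha$ by hypothesis, it fails for every other value, yielding alternative $(ii)$.

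The only point that requires care is the reduction from the abstract eigenvalue condition to the projective matching $[A_{\tilde\alpha,p} v] = [w]$: one has to rule out that rescaling $u_L$ or $u_R$ could spoil the match, but this is exactly what the passage to $\RP$ takes care of, and Lemma~\ref{DMM} guarantees that $A_{\tilde\alpha,p}$ descends to a well-defined bijection of $\RP$, so the class $[A_{\tilde\alpha,p} v]$ depends only on $[v]$. Apart from this bookkeeping, the argument is a direct calculation with the specific form of $A_{\tilde\alpha,p}$, and no deeper obstacle is anticipated.
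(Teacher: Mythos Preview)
Your argument is correct and follows essentially the same route as the paper: both reduce the eigenvalue condition to a projective matching at $p$ and exploit that the parabolic family $A_{\tilde\alpha,p}$ fixes the line $[(0,1)^T]$ and acts injectively on every other line. The only cosmetic difference is that you set up the one-sided solutions $u_L,u_R$ and the matching $[A_{\tilde\alpha,p}v]=[w]$ from the outset and then split on $v_1=u_L(p-)$, whereas the paper takes two eigenfunctions for distinct parameters, compares them via the transfer matrices to force $u(p-)=0$, and then checks that the same $u$ works for all $\tilde\alpha$.
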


\begin{proof}
Note first that on the level of transfer matrices, the local point interaction inserts the factor \eqref{e.deltainteraction} between $M(y,p+;E)$ and $M(p-,x;E)$ for $a \le x < p < y \le b$.

Let $E \in \mathbb{R}$ be such that $E \in \sigma_p(\hap)$. Then there exists a non-zero $u\in D(\hap)$ with $\hap u = E u$. In particular, we have
$$
\begin{pmatrix} u(p+) \\ {u'(p+)}\end{pmatrix}= A_{\alpha,p}\begin{pmatrix} {u(p-)} \\ {u'(p-)} \end{pmatrix}.
$$

Suppose $ii)$ fails; and hence we have to prove $i)$. Let $E \in \sigma_p(H_{\tilde\alpha,p})$ for some $\tilde \alpha \in \mathbb{R} \setminus \{ \alpha \}$. There exists a non-zero $v \in D(H_{\tilde\alpha,p})$ such that $H_{\tilde\alpha,p} v = E v$.
Since $M = M(p-,a;E) \in \mathrm{SL}(2,\mathbb{R})$ and $[(u(a),u'(a))^T] = [(v(a),v'(a))^T]$, we have
$$
[(u(p-),u'(p-))^T] = \tilde M([(u(a),u'(a))^T]) = \tilde M([(v(a),v'(a))^T]) = [(v(p-),v'(p-))^T].
$$
Thus there exists $k \in \mathbb{R} \setminus \{0\}$ such that
$$
\begin{pmatrix} u(p-) \\ u'(p-) \end{pmatrix} = k \begin{pmatrix} v(p-) \\ v'(p-) \end{pmatrix},
$$
and since $u \in D(H_{\alpha,p})$ and $v \in D(H_{\tilde\alpha,p})$,
$$
\begin{pmatrix} u(p-) \\ (\alpha-\tilde\alpha) u(p-) + u'(p-) \end{pmatrix} = \begin{pmatrix} 1 & 0 \\ \alpha - \tilde \alpha & 1 \end{pmatrix} \begin{pmatrix} u(p-) \\ u'(p-) \end{pmatrix} = A_{\tilde\alpha,p}^{-1} A_{\alpha,p} \begin{pmatrix} u(p-) \\ u'(p-) \end{pmatrix} =
$$
$$ = A_{\tilde\alpha,p}^{-1} \begin{pmatrix} u(p+) \\ u'(p+) \end{pmatrix} = k A_{\tilde\alpha,p}^{-1} \begin{pmatrix} v(p+) \\ v'(p+) \end{pmatrix} = k \begin{pmatrix} v(p-) \\ v'(p-) \end{pmatrix} = \begin{pmatrix} u(p-) \\ u'(p-) \end{pmatrix}$$
and then $(\alpha-\tilde\alpha)u(p-)+u'(p-)=u'(p-)$. Since $\tilde\alpha\not=\alpha$, $u(p-)=0$. Thus $\forall\tilde\alpha\in\mathbb{R}$,
$$\begin{pmatrix} u(p+) \\ {u'(p+)}\end{pmatrix}=\begin{pmatrix}0\\u'(p+)\end{pmatrix}
=\begin{pmatrix}1&0&\\ \alpha&1\end{pmatrix}\begin{pmatrix} 0 \\ {u'(p-)}\end{pmatrix}=\begin{pmatrix}1&0&\\\tilde\alpha&1\end{pmatrix}\begin{pmatrix} 0 \\ {u'(p-)}\end{pmatrix}=
A_{\tilde\alpha,p}\begin{pmatrix} {u(p-)} \\ {u'(p-)} \end{pmatrix}$$

Therefore $u \in \sigma_p(H_{\tilde\alpha,p})$, $\forall\tilde\alpha\not=\alpha$ and $i)$ holds.
\end{proof}


\section{The Case of a Single General Point Interaction}\label{s.4}

Now we construct the operator with one general point interaction. Let $I = [a,b] \subset \mathbb R$ be a closed finite interval. Let $V \in L^1(I)$ be a real-valued function, $p \in I$ an interior point and $A_{\alpha, r, \theta} \in \mathrm{SL}(2,\mathbb{R})$ with Iwasawa decomposition $A_{\alpha,r,\theta} = P_\alpha H_r E_\theta$, where $P_\alpha \in \mathcal{P}$, $H_r \in \mathcal{H}$ and $E_\theta \in \mathcal{E}$. We consider the formal differential expression
$$
\tau := -\frac{d^2}{dx^2} + V.
$$

The corresponding maximal operator $T_{\alpha,r,\theta}$ is defined by
$$
T_{\alpha,r,\theta} f=\tau f
$$
$$
D(T_{\alpha,r,\theta}) = \Big\{ f\in L^2(I):\,f,\,f'\mbox{ abs. cont in }J\backslash \{p\},-f''+Vf\in L^2(J),
$$
\[
\left(\begin{array}{c}
     f(p+) \\
     f'(p+)
\end{array}\right)=A_{\alpha,r,\theta}\left(\begin{array}{c}
f(p-)\\
f'(p-)\end{array}\right) \Big\}.
	\]
	
Let us consider the selfadjoint restriction $H_{\alpha,r,\theta}$ of $T_{\alpha,r,\theta}$ in $L^2(I)$, see equation (4.3) in \cite{WZ}, defined by
\begin{equation}\label{halfa2}
H_{\alpha,r,\theta} f=\tau f
\end{equation}
\[
\begin{array}{ccc} D(H_{\alpha, r, \theta}) & = & \left\{ f \in D(T_{\alpha,r,\theta}) : \begin{array}{c} f(a) \cos \delta + f'(a) \sin \delta = 0 \\ {f(b) \cos \gamma + f'(b) \sin\gamma = 0} \end{array} \right\}, \quad \delta,\,\gamma \in [0,\pi). \end{array}
\]

\begin{lemma}\label{lem1}
Let $\theta, \tilde\theta \in \mathbb{R}$ and fix $v \in \mathbb{R}^2$. The following holds: $\tilde\theta \not= \theta + k\pi$, $k \in \mathbb{Z}$ if and only if $[A_{\alpha,r,\theta} v] \not= [A_{\alpha,r,\tilde\theta}v]$.
\end{lemma}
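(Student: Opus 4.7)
The plan is to exploit the Iwasawa decomposition to reduce the claim to a statement about the elliptic factor alone. Writing $A_{\alpha,r,\theta}\,v = (P_\alpha H_r)(E_\theta v)$ and $A_{\alpha,r,\tilde\theta}\,v = (P_\alpha H_r)(E_{\tilde\theta} v)$, the common left factor $P_\alpha H_r$ lies in $\mathrm{SL}(2,\mathbb{R})$ and hence, by Lemma \ref{DMM}, induces a bijection on $\RP$. Consequently,
\[
[A_{\alpha,r,\theta} v] = [A_{\alpha,r,\tilde\theta} v] \quad \Longleftrightarrow \quad [E_\theta v] = [E_{\tilde\theta} v],
\]
so the problem reduces to showing that for a fixed nonzero $v \in \mathbb{R}^2$ one has $[E_\theta v] \neq [E_{\tilde\theta} v]$ precisely when $\tilde\theta \notin \theta + \pi\mathbb{Z}$.

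Next, I would apply $E_{\tilde\theta}^{-1} = E_{-\tilde\theta}$ on the left and use $E_{-\tilde\theta} E_\theta = E_{\theta-\tilde\theta}$ to rewrite $[E_\theta v] = [E_{\tilde\theta} v]$ as $[v] = [E_\phi v]$, where $\phi := \theta - \tilde\theta$. By the definition of the equivalence relation on $\mathbb{R}^2 \setminus \{0\}$, this means $E_\phi v = \lambda v$ for some $\lambda \in \mathbb{R} \setminus \{0\}$; since $E_\phi$ is an isometry we additionally get $|\lambda| = 1$, so $v$ must be a real eigenvector of $E_\phi$ with eigenvalue $\pm 1$.

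The remaining step is to identify exactly when this occurs. The characteristic polynomial of $E_\phi$ is $\lambda^2 - 2(\cos\phi)\lambda + 1$, whose discriminant equals $-4\sin^2\phi$, so $E_\phi$ admits a real eigenvalue if and only if $\sin\phi = 0$, i.e.\ $\phi \in \pi\mathbb{Z}$. In that case $E_\phi = \pm I$ and $[v] = [E_\phi v]$ for every nonzero $v$; in the opposite case $E_\phi$ has no real eigenvector and $[v] \neq [E_\phi v]$ for every nonzero $v$. Combined with the reduction above, this yields the lemma. The argument is short and there is no genuine obstacle once the Iwasawa reduction is in place; if preferred, the same conclusion follows in one line from Remark \ref{linearg}, by identifying $\mathbb{R}^2$ with $\mathbb{C}$ so that $E_\theta$ corresponds to multiplication by $e^{i\theta}$ and its effect on equivalence classes is visibly a shift of the argument by $\theta$ modulo $\pi$.
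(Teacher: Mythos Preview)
Your proof is correct and follows essentially the same approach as the paper: both arguments use the Iwasawa decomposition to factor out the common $P_\alpha H_r \in \mathrm{SL}(2,\mathbb{R})$, invoke Lemma~\ref{DMM} to reduce the question to whether $[E_\theta v] = [E_{\tilde\theta} v]$, and then conclude by the action of rotations on $\RP$. The only difference is cosmetic: the paper appeals directly to the geometric fact that $E_\gamma$ rotates $v$ by angle $\gamma$ (hence shifts the projective class exactly by $\gamma$ mod $\pi$), whereas you give a slightly more formal justification via the eigenvalue analysis of $E_\phi$; both are valid and equivalent.
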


\begin{proof}
\hfill
\begin{itemize}

\item [$\Rightarrow$)] Let $\tilde\theta \not= \theta + k\pi$, $k \in \mathbb{Z}$ and $v\in \mathbb{R}^2$. Since $E_\gamma$ acts as a rotation of $\gamma$ degrees on $v$, we have $[E_\theta v] \not= [E_{\tilde\theta}v]$. Taking into account that $P_\alpha H_r \in \mathrm{SL}(2,\mathbb{R})$, Lemma~\ref{DMM} gives that $[A_{\alpha, r, \theta} v] \not= [A_{\alpha, r, \tilde\theta}v]$.

\item[$\Leftarrow$)] Suppose now  $[A_{\alpha,r,\theta} v] \not= [A_{\alpha,r,\tilde\theta}v]$. Recalling the definition introduced in Lemma \ref{DMM},
$$
\widetilde{P_\alpha H_r} [E_{\tilde\theta} v] = [P_\alpha H_r E_{\tilde\theta} v] = [A_{\alpha, r, \tilde \theta} v] \not= [A_{\alpha,r,\theta} v] = \widetilde{P_\alpha H_r} [E_{\theta} v],
$$
since $\widetilde{P_\alpha H_r}$ is injective. Thus $[E_\theta v] \not= [E_{\tilde\theta}v]$ and hence $\tilde\theta \not= \theta + k\pi$, $k \in \mathbb{Z}$.

\end{itemize}
\end{proof}

\begin{lemma}\label{lem2}
Let $r, \tilde r > 0$, $\tilde r \not= r$ and $v \in \mathbb{R}^2$. The following are equivalent:
\begin{itemize}
    \item [$i)$] $[v] = [(\sin \theta, \cos \theta)^T]$ or $[v] = [(\cos \theta, -\sin \theta)^T]$,
    \item[$ii)$] $[A_{\alpha,r,\theta} v] = [A_{\alpha,\tilde r,\theta}v]$.
\end{itemize}
\end{lemma}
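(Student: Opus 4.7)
The plan is to peel off the factors of the Iwasawa decomposition $A_{\alpha,r,\theta} = P_\alpha H_r E_\theta$ one at a time and reduce the equivalence to a transparent statement about when a vector lies on a coordinate axis. Since $\tilde r \neq r$ appears only in the hyperbolic factor (and $P_\alpha$, $E_\theta$ are the same on both sides), the first move is to strip the parabolic factor: by Lemma~\ref{DMM} the map $\widetilde{P_\alpha}$ is a bijection on $\RP$, so
$$
[A_{\alpha,r,\theta} v] = [A_{\alpha,\tilde r,\theta} v] \iff [H_r E_\theta v] = [H_{\tilde r} E_\theta v].
$$
Setting $w := E_\theta v = (w_1,w_2)^T$, the problem becomes: for which $w \in \mathbb{R}^2 \setminus \{0\}$ does $[H_r w] = [H_{\tilde r} w]$ hold, given $r,\tilde r > 0$ with $r \neq \tilde r$?

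This is now a one-line computation. Writing out $H_r w = (r w_1,\, w_2/r)^T$ and $H_{\tilde r} w = (\tilde r w_1,\, w_2/\tilde r)^T$, the two vectors are proportional iff their $2 \times 2$ determinant vanishes:
$$
r w_1 \cdot \frac{w_2}{\tilde r} - \frac{w_2}{r} \cdot \tilde r w_1 = w_1 w_2 \left(\frac{r}{\tilde r} - \frac{\tilde r}{r}\right) = 0.
$$
Since $r \neq \tilde r$ and $r,\tilde r > 0$, the second factor is nonzero, so this forces $w_1 w_2 = 0$; that is, $[w] = [(1,0)^T]$ or $[w] = [(0,1)^T]$. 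Conversely, if $w$ lies on either coordinate axis, $H_r w$ and $H_{\tilde r} w$ clearly lie on that same axis and hence define the same point in $\RP$.

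The final step is to translate the condition ``$[E_\theta v]$ lies on a coordinate axis'' back into a condition on $v$. Since $E_\theta$ is a rotation (and in particular $E_\theta^{-1} = E_{-\theta}$), one simply checks
$$
E_\theta \begin{pmatrix} \sin\theta \\ \cos\theta \end{pmatrix} = \begin{pmatrix} 0 \\ 1 \end{pmatrix}, \qquad E_\theta \begin{pmatrix} \cos\theta \\ -\sin\theta \end{pmatrix} = \begin{pmatrix} 1 \\ 0 \end{pmatrix},
$$
so $[E_\theta v] = [(0,1)^T]$ iff $[v] = [(\sin\theta,\cos\theta)^T]$ and $[E_\theta v] = [(1,0)^T]$ iff $[v] = [(\cos\theta,-\sin\theta)^T]$. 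Chaining the equivalences yields $i) \Leftrightarrow ii)$.

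There is no real obstacle here — the only small subtlety is remembering that the parabolic factor $P_\alpha$ must be disposed of first (via its projective bijectivity) before comparing, since otherwise the determinant computation would be cluttered by terms involving $\alpha$. The two ``exceptional'' directions in $i)$ are then forced naturally, as the preimages under $E_\theta$ of the eigenlines of the hyperbolic subgroup $\mathcal{H}$.
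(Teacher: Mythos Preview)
Your proof is correct and follows essentially the same route as the paper's: strip the common parabolic factor $P_\alpha$ using the projective bijectivity from Lemma~\ref{DMM}, reduce to the question of when $H_r w$ and $H_{\tilde r} w$ lie on the same projective line for $w = E_\theta v$, and then pull the answer back through $E_\theta^{-1}$. The only cosmetic difference is that the paper phrases the core step as ``$E_\theta v$ is an eigenvector of the diagonal matrix $H_{\tilde r}^{-1} H_r$'' while you compute the $2\times 2$ determinant directly; these are the same observation.
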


\begin{proof}
\hfill
\begin{itemize}
    \item[$ii)\Rightarrow i)$] Assume $[A_{\alpha,r,\theta} v] = [A_{\alpha,\tilde r,\theta}v]$. Then $\tilde P_\alpha [H_r E_\theta v] = \tilde P_\alpha [H_{\tilde r} E_\theta v]$. Since by Lemma~\ref{DMM} $\tilde P_\alpha$ is injective, $[H_r E_\theta v] = [H_{\tilde r} E_\theta v]$, that is, there exists $k \in \mathbb{R} \setminus \{ 0 \}$ such that $H_r E_\theta v = k H_{\tilde r} E_\theta v$. Thus, $H_{\tilde r}^{-1} H_r E_\theta v = k E_\theta v$, and therefore $E_\theta v$ is eigenvector of the diagonal matrix $H_{\tilde r}^{-1} H_r$. Since $r \not= \tilde r$, the eigenvectors are multiples of $[0,1]^T$ or $[1,0]^T$. Then $[E_\theta v] = [(1,0)^T]$ or $[E_\theta v] = [(0,1)^T]$, taking into account that
    $$
    E_\theta^{-1} = \begin{pmatrix} \cos \theta & \sin \theta \\ -\sin \theta & \cos \theta \end{pmatrix}
    $$
    and Lemma~\ref{DMM}, we obtain $[v] = [(\sin \theta, \cos \theta)^T]$ or $[v] = [(\cos \theta, -\sin \theta)^T]$.

    \item[$i) \Rightarrow ii)$] Assume $[v] = [(\sin \theta, \cos \theta)^T]$. Then,
    $$
    [A_{\alpha,r,\theta} v] = [P_\alpha H_r (0,1)^T] = [\frac{1}{r} P_\alpha (0,1)^T] = [\frac{1}{\tilde r} P_\alpha (0,1)^T] = [P_\alpha H_{\tilde r} (0,1)^T] = [A_{\alpha,\tilde r,\theta} v].
    $$
    When $[v] = [(\cos \theta, -\sin \theta)^T]$, the result follows in an analogous way.

\end{itemize}
\end{proof}

\begin{lemma}\label{lem3}
Let $\alpha, \tilde \alpha \in \mathbb{R}$, $\tilde \alpha \not= \alpha$ and $v \in \mathbb{R}^2$. The following are equivalent:
\begin{itemize}
    \item [$i)$] $[v] = [(\cos \theta, -\sin \theta)^T]$,
    \item[$ii)$] $[A_{\alpha,r,\theta} v] = [A_{\tilde \alpha, r,\theta}v]$.
\end{itemize}
\end{lemma}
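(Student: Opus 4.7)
The plan is to mirror the structure of the proofs of Lemmas~\ref{lem1} and~\ref{lem2}: write $A_{\alpha,r,\theta} = P_\alpha H_r E_\theta$ and $A_{\tilde\alpha,r,\theta} = P_{\tilde\alpha} H_r E_\theta$, note that the two expressions differ only in the leftmost parabolic factor, and exploit the rigidity of the parabolic action. The key observation that will drive the argument is that a nontrivial parabolic matrix $P_s$ with $s \neq 0$ fixes exactly one line in $\mathbb{R}^2$, namely the one spanned by $(1,0)^T$; this is immediate from the shape of $P_s$ by solving $P_s (x,y)^T = \lambda (x,y)^T$, which forces $y=0$. Together with Lemma~\ref{DMM}, this single fact yields both implications.

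For $ii)\Rightarrow i)$, I would rewrite $[A_{\alpha,r,\theta} v] = [A_{\tilde\alpha,r,\theta} v]$ as $[P_\alpha(H_r E_\theta v)] = [P_{\tilde\alpha}(H_r E_\theta v)]$, which via Lemma~\ref{DMM} applied to $P_\alpha^{-1}$ is equivalent to $[H_r E_\theta v] = [P_{\tilde\alpha - \alpha}(H_r E_\theta v)]$. Since $\tilde\alpha - \alpha \neq 0$, the vector $H_r E_\theta v$ must lie on the unique fixed line of $P_{\tilde\alpha-\alpha}$, so $[H_r E_\theta v] = [(1,0)^T]$. Because $H_r$ also preserves this line, this forces $[E_\theta v] = [(1,0)^T]$, and applying $E_\theta^{-1}$ (whose first column is $(\cos\theta,-\sin\theta)^T$) yields $[v] = [(\cos\theta,-\sin\theta)^T]$.

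For the converse $i)\Rightarrow ii)$, a direct computation shows $E_\theta (\cos\theta, -\sin\theta)^T = (1,0)^T$, so if $[v] = [(\cos\theta,-\sin\theta)^T]$ then $E_\theta v$ is a nonzero scalar multiple of $(1,0)^T$. The matrix $H_r$ rescales this vector, and both $P_\alpha$ and $P_{\tilde\alpha}$ fix the line $[(1,0)^T]$, so $[A_{\alpha,r,\theta} v] = [(1,0)^T] = [A_{\tilde\alpha,r,\theta} v]$.

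I do not foresee a substantial obstacle here; the point to highlight carefully is the identification of the unique parabolic fixed line, which is precisely what distinguishes this lemma from Lemma~\ref{lem2}. In the hyperbolic situation of Lemma~\ref{lem2} there are two invariant coordinate axes, producing two admissible directions $[(\sin\theta,\cos\theta)^T]$ and $[(\cos\theta,-\sin\theta)^T]$, whereas here the single parabolic fixed line leaves only the preimage $[(\cos\theta,-\sin\theta)^T]$ of $(1,0)^T$ under $E_\theta$ as the admissible direction.
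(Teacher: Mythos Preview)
Your proof is correct and follows essentially the same approach as the paper: both reduce condition~$ii)$ to the statement that $H_r E_\theta v$ is an eigenvector (equivalently, lies on the unique fixed projective line) of the nontrivial parabolic $P_{\tilde\alpha}^{-1}P_\alpha = P_{\alpha-\tilde\alpha}$, and then invert $H_r E_\theta$ to identify $[v]$. The only cosmetic difference is that you apply $P_\alpha^{-1}$ and then peel off $H_r$ and $E_\theta$ separately, whereas the paper applies $P_{\tilde\alpha}^{-1}$ and computes $(H_r E_\theta)^{-1}(1,0)^T$ in one step.
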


\begin{proof}
\hfill
\begin{itemize}
    \item[$ii)\Rightarrow i)$] Assume $[A_{\alpha,r,\theta} v] = [A_{\tilde \alpha, r, \theta} v]$, that is, there exists $k \in \mathbb{R} \setminus \{ 0 \}$ such that $P_\alpha H_r E_\theta v = k P_{\tilde\alpha} H_r E_\theta v$. Then $P_{\tilde\alpha}^{-1} P_\alpha H_r E_\theta v = k H_r E_\theta v$. Thus $H_r E_\theta v$ is an eigenvector of the matrix $P_{\tilde\alpha}^{-1} P_\alpha$. Since $\alpha \not= \tilde\alpha$, $P_{\tilde\alpha}^{-1} P_\alpha \not= I$ and its eigenvectors are multiples of $(1,0)^T$. Then $[H_r E_\theta v] = [(1,0)^T]$, and taking into account that
    $$
    (H_r E_\theta)^{-1} = \begin{pmatrix} \frac{1}{r} \cos \theta & r \sin \theta \\ -\frac{1}{r} \sin \theta & r \cos \theta \end{pmatrix}
    $$
    as well as Lemma~\ref{DMM}, we obtain $[v] = [\frac{1}{r} (\cos \theta, -\sin \theta)^T] = [(\cos \theta, -\sin \theta)^T]$.

    \item[$i) \Rightarrow ii)$] Assume $[v] = [(\cos \theta, -\sin \theta)]$. Then
    $$
    [A_{\alpha, r, \theta} v] = [P_\alpha r (1,0)^T] = [r (1,0)^T] = [P_{\tilde \alpha} r (1,0)^T] = [A_{\tilde \alpha, r, \theta} v].
    $$

\end{itemize}

\end{proof}

\begin{theorem}\label{tmon}
Let $E \in \mathbb{R}$. If $E \in \sigma_p (H_{\alpha, r, \theta})$, then:
\begin{itemize}
    \item[a)] $E \in \sigma_p(H_{\alpha, r, \tilde \theta})$ if and only if $ \tilde \theta = \theta + k \pi$, $k \in \mathbb{Z}$.
    \item[b)] One of the following holds:
        \begin{itemize}
            \item[$i)$] $E \not\in \sigma_p (H_{\alpha, \tilde r, \theta})$ for every $\tilde r \not= r$.
            \item[$ii)$] $E \in \sigma_p (H_{\alpha, \tilde r, \theta})$ for every $\tilde r > 0$.
        \end{itemize}
    \item[c)] One of the following holds:
    \begin{itemize}
        \item[$i)$] $E \not\in \sigma_p (H_{\tilde \alpha, r, \theta})$ for every $\tilde \alpha \not= \alpha$.
        \item[$ii)$] $E \in \sigma_p (H_{\tilde \alpha, r, \theta})$ for every $\tilde \alpha \in \mathbb R$.
    \end{itemize}
\end{itemize}
\end{theorem}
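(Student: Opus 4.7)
The plan is to reduce all three parts to a single projective equation and then invoke the three preceding lemmas. First I would show that, for fixed $V$, $E$, boundary parameters $\delta,\gamma$, and endpoints $a,b$, the one-sided projective classes
\[
[v_-] := [(u_L(p-), u_L'(p-))^T], \qquad [v_+] := [(u_R(p+), u_R'(p+))^T]
\]
are well-defined elements of $\RP$ that do not depend on the interaction parameters $\alpha, r, \theta$. Here $u_L$ denotes any non-trivial solution of $\tau u = Eu$ on $[a,p)$ satisfying the boundary condition at $a$ (unique up to a nonzero scalar, so $[v_-]$ is unambiguous), and $u_R$ is the analogous object on $(p,b]$ for the boundary condition at $b$. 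Then I would establish the equivalence
\[
E \in \sigma_p(H_{\alpha,r,\theta}) \;\Longleftrightarrow\; [A_{\alpha,r,\theta}\, v_-] = [v_+],
\]
which just says that an eigenfunction exists iff one can splice scalar multiples of $u_L$ and $u_R$ across the point $p$ through the interaction matrix; $L^2$-integrability is automatic on the compact interval $I$.

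With this reformulation, each of (a), (b), (c) becomes a question about when the image $[A_{\cdot,\cdot,\cdot}\, v_-]$ is preserved under changing a single Iwasawa factor, and the three preceding lemmas are precisely tailored to answer this. Since the hypothesis $E \in \sigma_p(H_{\alpha,r,\theta})$ gives $[v_+] = [A_{\alpha,r,\theta}\, v_-]$, membership for the varied parameter reduces to an identity of the form $[A_{\text{new}}\, v_-] = [A_{\alpha,r,\theta}\, v_-]$. For (a), replacing $\theta$ by $\tilde\theta$, Lemma~\ref{lem1} yields the identity exactly when $\tilde\theta = \theta + k\pi$. For (b), replacing $r$ by $\tilde r$, Lemma~\ref{lem2} gives the dichotomy: if $[v_-] \in \{[(\sin\theta,\cos\theta)^T], [(\cos\theta,-\sin\theta)^T]\}$ the identity holds for every $\tilde r > 0$, otherwise it fails for every $\tilde r \neq r$. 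Part (c) follows the same pattern via Lemma~\ref{lem3}: either $[v_-] = [(\cos\theta,-\sin\theta)^T]$ and membership holds for all $\tilde\alpha \in \mathbb{R}$, or the identity fails for every $\tilde\alpha \neq \alpha$.

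The main obstacle I foresee is really the first paragraph: properly setting up $[v_-]$ and $[v_+]$ as non-zero projective classes (which uses uniqueness of the initial value problem to exclude the possibility that $u_L$ or $u_R$ vanish together with their derivative at $p$) and checking that the splicing equivalence genuinely characterizes membership in $\sigma_p(H_{\alpha,r,\theta})$. Once that foundation is in place, the three dichotomies reduce by a one-line substitution to Lemmas~\ref{lem1}--\ref{lem3}, which already contain the real computational content of the argument.
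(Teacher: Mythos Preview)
Your proposal is correct and follows essentially the same route as the paper: reduce the eigenvalue condition to the projective identity $[A_{\cdot,\cdot,\cdot}\,v_-]=[v_+]$ and then invoke Lemmas~\ref{lem1}--\ref{lem3}. The only organizational difference is that you front-load the splicing equivalence $E\in\sigma_p(H_{\alpha,r,\theta})\Leftrightarrow[A_{\alpha,r,\theta}v_-]=[v_+]$ as a single lemma and use it three times, whereas the paper re-derives the projective identity inside each part (by taking a second eigenfunction $v$ and transporting boundary data via $M(p-,a;E)$ and $M(b,p+;E)$) and, in (b) and (c), builds the eigenfunction for the new parameter explicitly rather than appealing to the abstract equivalence; your packaging is a bit cleaner but the content is the same.
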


\begin{proof}
Since $E \in \sigma_p (H_{\alpha, r, \theta})$, there exists $u \in L^2(a,b)$, $u \not= 0$, such that $u \in D(H_{\alpha,r,\theta})$ and $H_{\alpha,r,\theta} u = E u$.
\begin{itemize}

\item[a)] Suppose now that $E \in \sigma_p (H_{\alpha, r, \tilde \theta})$ for some $\tilde \theta \not= \theta$. Then there exists $v \in L^2(a,b)$, $v \not= 0$, such that $v \in D(H_{\alpha, r, \tilde \theta})$ and $H_{\alpha, r, \tilde \theta} v = E v$. We will now consider the matrices $M(p-,a;E)$ and $M(b,p+;E)$, which do not depend on $\alpha, r$ and $\theta$.\\
    Since $M := M(p-,a;E) \in \mathrm{SL}(2,\mathbb{R})$ and $[(u(a),u'(a))^T] = [(v(a),v'(a))^T]$, we have
$$
[(u(p-),u'(p-))^T] = \tilde M([(u(a),u'(a))^T]) = \tilde M([(v(a),v'(a))^T]) = [(v(p-),v'(p-))^T].
$$
Thus there exists $\lambda \in \mathbb{R} \setminus \{ 0 \}$ such that
$$
\begin{pmatrix} u(p-) \\ u'(p-) \end{pmatrix} = \lambda \begin{pmatrix} v(p-) \\ v'(p-) \end{pmatrix}.
$$
Analogously, since $M^{-1}(b,p+;E) \in \mathrm{SL}(2,\mathbb{R})$, there exists $\mu \in \mathbb{R} \setminus \{ 0 \}$ such that
$$
\begin{pmatrix} u(p+) \\ u'(p+) \end{pmatrix} = \mu \begin{pmatrix} v(p+)\\v'(p+) \end{pmatrix}.
$$
Since
$$
\begin{pmatrix} u(p+) \\ u'(p+) \end{pmatrix} = A_{\alpha,r,\theta} \begin{pmatrix} u(p-)\\u'(p-) \end{pmatrix} \qquad \mbox{and} \qquad \begin{pmatrix} v(p+)\\v'(p+) \end{pmatrix} = A_{\alpha,r,\tilde\theta} \begin{pmatrix} v(p-)\\v'(p-) \end{pmatrix},
$$
we have
$$
[A_{\alpha,r,\theta} (u(p-),u'(p-))^T] = [A_{\alpha,r,\tilde\theta} (v(p-),v'(p-))^T] = [A_{\alpha,r,\tilde\theta} (u(p-),u'(p-))^T].
$$
By Lemma~\ref{lem1} this happens if and only if  $\tilde\theta = \theta + k \pi$, $k \in \mathbb{Z}$.

\item[b)] Let us assume that $i)$ is false. Then for some $r_0 \not= r$, there is $E \in \sigma_p (H_{\alpha, r_0, \theta})$. Therefore there exists a non-zero $v \in L^2(a,b)$ such that $v \in D(H_{\alpha, r_0, \theta})$ and $H_{\alpha, r_0, \theta} v = E v$. As in case $a)$ above we conclude
$$
[A_{\alpha, r, \theta} (u(p-),u'(p-))^T] = [A_{\alpha, r_0, \theta} (v(p-),v'(p-))^T] = [A_{\alpha, r_0, \theta} (u(p-),u'(p-))^T].
$$
By Lemma~\ref{lem2} this happens if and only if  $[(u(p-),u'(p-))^T] = [(\sin \theta, \cos \theta)^T]$ or $[(u(p-),u'(p-))^T] = [(\cos \theta, -\sin \theta)^T]$.

Let us assume that $[(u(p-),u'(p-))^T] = [(\sin \theta, \cos \theta)^T]$. If $[(u(p-),u'(p-))^T] = [(\cos \theta, -\sin \theta)^T]$, the argument proceeds analogously. There exists $c \in \mathbb{R} \setminus \{ 0 \}$ such that $(u(p-),u'(p-))^T = c (\sin \theta, \cos \theta)^T$. We normalize and take $c=1$. Let us verify that for each $\tilde r > 0$, $E \in \sigma(H_{\alpha,\tilde r,\theta})$ with eigenvector
\[
w(x) := \left\{ \begin{array}{cc} \frac{\tilde r}{r} u(x) & \mbox{if } a \leq x < p  \\\\ u(x) & \mbox{if } p < x \leq b \end{array} \right.
\]

First notice that $w$ satisfies the conditions at $a$ and $b$ of the functions in $D(H_{\alpha,r,\theta})$ since $u$ satisfies these conditions too. Now
$$
A_{\alpha, \tilde r, \theta} \begin{pmatrix} w(p-) \\ w'(p-) \end{pmatrix} = A_{\alpha, \tilde r, \theta} \left( \frac{\tilde r}{r} \begin{pmatrix} u(p-) \\ u'(p-) \end{pmatrix} \right) = \frac{\tilde r}{r} A_{\alpha, \tilde r, \theta} \begin{pmatrix} \sin \theta \\ \cos \theta \end{pmatrix} = \frac{\tilde r}{r} \frac{1}{\tilde r} \begin{pmatrix} \alpha\\ 1\end{pmatrix} = \frac{1}{r} \begin{pmatrix} \alpha\\ 1\end{pmatrix} = $$
$$
= A_{\alpha,r,\theta} \begin{pmatrix} \sin \theta \\ \cos \theta \end{pmatrix} = \begin{pmatrix} u(p+) \\ u'(p+) \end{pmatrix} = \begin{pmatrix} w(p+)\\w'(p+) \end{pmatrix}.
$$
The first and second equalities hold by definition of $w$ and $u$, the next three equalities are straightforward calculations. The equality before the last one follows because $u \in D(H_{\alpha,r,\theta})$ and the last one follows because $w = u$ to the right of $p$. Therefore $w \in D(H_{\alpha,\tilde r,\theta})$, $\tau w = Ew$ in $[a,b] \backslash \{p\}$, and $E$ is an eigenvalue for $H_{\alpha,\tilde r,\theta}$, $\tilde r>0$.

\item[c)] Let us assume that $i)$ is false. Then for some $\alpha_0 \not= \alpha$, there is $E \in \sigma_p (H_{\alpha_0,r,\theta})$. Therefore there exists $v \in L^2(a,b)$, $v \not= 0$, such that $v \in D(H_{\alpha_0,r,\theta})$ and $H_{\alpha_0,r,\theta} v = E v$. As in case a) above we conclude
$$
[A_{\alpha,r,\theta} (u(p-),u'(p-))^T] = [A_{\alpha_0, r,\theta} (v(p-),v'(p-))^T] = [A_{\alpha_0,r,\theta}(u(p-),u'(p-))^T].
$$
By Lemma~\ref{lem3} this happens if and only if $[(u(p-),u'(p-))^T] = [(\cos\theta,-\sin\theta)^T]$. There exists $c \in \mathbb{R} \setminus \{ 0 \}$ such that $(u(p-),u'(p-))^T = c(\sin\theta,\cos\theta)^T$. We normalize and take $c=1$. For all $\tilde \alpha \in \mathbb{R}$,
$$
A_{\tilde\alpha,r,\theta} \begin{pmatrix} u(p-)\\u'(p-) \end{pmatrix} = A_{\tilde\alpha,r,\theta} \begin{pmatrix} \cos\theta\\-\sin\theta \end{pmatrix} = r \begin{pmatrix} 1&\tilde\alpha\\0&1 \end{pmatrix} \begin{pmatrix} 1\\0 \end{pmatrix} = r \begin{pmatrix} 1\\0 \end{pmatrix}.
$$
Therefore, for every $\tilde \alpha \in \mathbb{R}$, $u \in D(H_{\tilde\alpha,r,\theta})$ and $H_{\tilde\alpha,r,\theta} u = E u$.
\end{itemize}
\end{proof}


\section{The Case of Countably Many General Point Interactions}\label{s.5}

Let $-\infty \leq a < b \leq \infty$ and let $V \in L_\mathrm{loc}^1(a,b)$ be a real-valued function. Fix a set of points  $M = \{ x_n \}_{n \in I} \subset(a,b)$, where $I \subseteq \mathbb{Z}$. We assume that the discrete set $M$ accumulates at most at $a$ or $b$. Let $ \Lambda := \{ \alpha_n \} \subset \mathbb R$, $R := \{ r_n \} \subset (0,\infty)$ and $\Theta := \{ \theta_n \} \subset \mathbb{R}$.

\begin{definition}\label{matrix}
Let $A_{\alpha_n,r_n,\theta_n} \in \mathrm{SL}(2,\mathbb{R})$ with Iwasawa decomposition $A_{\alpha_n, r_n, \theta_n} = P_{\alpha_n} H_{r_n} E_{\theta_n}$, where $P_{\alpha_n} \in \mathcal{P}$, $H_{r_n} \in\mathcal{H}$ and $E_{\theta_n} \in \mathcal{E}$ for every $n \in I$.
\end{definition}

We consider the formal differential expression
$$
\tau := -\frac{d^2}{dx^2} + V.
$$
The maximal operator $T_{\Lambda, R, \Theta}$ is defined by
$$
T_{\Lambda, R, \Theta} f = \tau f
$$
$$
D(T_{\Lambda, R, \Theta}) = \Big\{ f \in L^2(a,b) : \,f,\,f'\mbox{ abs. cont in } (a,b) \backslash M, -f'' + V f \in L^2(a,b),
$$
\[
\left(\begin{array}{c}
     f(x_n+) \\
     f'(x_n+)
\end{array}\right)=A_{\alpha_n,r_n,\theta_n}\left(\begin{array}{c}
f(x_n-)\\
f'(x_n-)\end{array}\right)\forall n\in I \Big\}
\]
	

\begin{definition}
Given $g \in L_\mathrm{loc}^1 (a,b)$ and $z \in \mathbb{C}$, we call $f$ a solution of $(\tau_{\Lambda,R,\Theta} - z) f = g$ if $f$ and $f'$ are absolutely continuous in $(a,b) \backslash M$ with $-f'' + V f - z f = g$ and
\[
\left(\begin{array}{c}
     f(x_n+) \\
     f'(x_n+)
\end{array}\right)=A_{\alpha_n,r_n,\theta_n}\left(\begin{array}{c}
f(x_n-)\\
f'(x_n-)\end{array}\right)\forall n\in I.
\]
\end{definition}

\begin{definition}
We define the Wronskian of two solutions $u_1$ and $u_2$ of $(\tau_{\Lambda,R,\Theta} -z)f=0 $ by
$$
W_x(u_1,u_2) = u_1(x+) u'_2(x+) - u'_1(x+) u_2(x+).
$$
\end{definition}

\begin{definition}
For $f, g \in D(T_{\Lambda, R, \Theta})$, we define the Lagrange bracket by
$$
[f,g]_x = \overline{f(x+)} g'(x+) - \overline{f'(x+)} g(x+).
$$
\end{definition}

The limits $[f,g]_a = \lim_{x \rightarrow a+} [f,g]_x$ and $[f,g]_b = \lim_{x \rightarrow b-} [f,g]_x$ exist; see \cite[Theorem 2.2]{BSW}.\\

A solution of $(\tau_{\Lambda,R,\Theta}-z)f=0$ is said to lie right (resp., left) in $L^2(a,b)$ if $f$ is square integrable in a neighborhood of $b$ (resp., $a$).
\begin{definition}
\hfill
\begin{itemize}

\item[$i)$] $\tau_{\Lambda, R, \Theta}$ is in the \textbf{limit circle case} (lcc) at $b$ if for every $z \in \mathbb{C}$, all solutions of $(\tau_{\lambda, R, \Theta} - z) f = 0$ lie right in $L^2(a,b)$.

\item[$ii)$] $\tau_{\Lambda,R,\theta}$ is in the \textbf{limit point case} (lpc) at $b$ if for every $z \in \mathbb{C}$, there is at least one solution of $(\tau_{\Lambda, R, \Theta} - z) f = 0$ not lying right in $L^2(a,b)$.
\end{itemize}
The same definition applies to the endpoint $a$.
\end{definition}

According to the \textit{Weyl alternative}, see \cite[Theorem 4.4]{BSW}, we have always either $i)$ or $ii)$.

\medskip

Consider the selfadjoint restriction $H_{\Lambda, R, \Theta}$ of $T_{\Lambda, R, \Theta}$ on $L^2(a,b)$, see \cite[pp. 216]{Pk}, \cite[Section 15]{ESSZ} and \cite[Theorem 2.2]{GeKi}, \cite[Section 3]{SEBA}, \cite[Theorem 1]{ADK},
$$
H_{\Lambda, R, \Theta} f = \tau f
$$
\[
\begin{array}{ccc}
D(H_{\Lambda, R, \Theta}) & = & \left\{ f \in D(T_{\Lambda, R, \Theta}) :
\begin{array}{c}
[v,f]_a = 0 \mbox{     if $\tau_{\Lambda, R, \Theta}$ lcc at $a$ }\\
{[w,f]_b=0 } \mbox{     if $\tau_{\Lambda, R, \Theta}$ lcc at $b$}
\end{array}
\right\},
\end{array}
\]
where $v$ and $w$ are non-trivial solutions of $(\tau_{\Lambda,R,\Theta} - \lambda) v = 0$ near $a$ and near $b$, respectively, $\lambda \in \mathbb{R}$.

\begin{remark}
In \cite[Theorem 5.2]{BSW}, the selfadjoint restrictions are characterized using unitary matrices. The case we are treating corresponds to the particular case of the connecting real selfadjoint boundary conditions.
\end{remark}

\begin{remark}\label{fixed}
Whenever we fix a parameter, we do not write it. For example if we fix $R$ and $\Theta$ we shall just write $H_\Lambda$ and analogously for the other cases.
\end{remark}

\begin{definition}\label{regu}
We say that $\tau_{\Lambda, R, \Theta}$ is regular at $a$ if $a$ is finite, $V \in L^1_\mathrm{loc}[a,b)$ and $a$ is not an accumulation point of $M$. The same definition applies to the endpoint $b$.
\end{definition}

If $\tau_{\Lambda, R, \Theta}$ is regular at $a$, then $\tau_{\Lambda, R, \Theta}$ is lcc at $a$ and the condition $[v,f]_a = 0$ can be replaced by
$$
f(a) \cos \psi + f'(a) \sin \psi = 0
$$
for $\psi \in [0,\pi)$. The same holds for $b$.\\

In the rest of this section we are going to fix the values of $\alpha_n$, $r_n$ and $\theta_n$ for $n \not=n_0$  and vary the parameters just at the point $n_0 \in I$. Set $\alpha = \alpha_{n_0}$, $r = r_{n_0}$ and $\theta = \theta_{n_0}$. The maximal operator will be denoted by $T_{\alpha, r, \theta}$ and its selfadjoint restriction by $H_{\alpha, r, \theta}$.\\

For $\delta$, $\gamma \in [0,\pi)$ and $[c,d] \subset [a,b]$ such that $[c,d] \cap M = \{ x_{n_0} \}$, define the operator
$$
H_{\alpha, r, \theta}^{\delta, \gamma} := H_{\alpha, r, \theta}.
$$
where $H_{\alpha, r, \theta}$ is as in formula \eqref{halfa2} from the previous section with $p = x_{n_0}$ and $I = [c,d]$.

Let $E \in \mathbb{R}$ be fixed and define
$$
P(E) := \{ (\alpha, r, \theta) \in \mathbb{R} \times (0, \infty) \times \mathbb{R} : E \in \sigma_p (H_{\alpha, r, \theta, M}) \}.
$$

\begin{lemma}\label{lmon}
There exist $\delta_0$, $\gamma_0 \in [0,\pi)$ such that if $(\alpha, r, \theta) \in P(E)$, then $E \in \sigma_p (H_{\alpha, r, \theta}^{\delta_0, \gamma_0})$.
\end{lemma}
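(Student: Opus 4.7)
The plan is to extract $\delta_0$ and $\gamma_0$ from the data on $[a,c]$ and $[d,b]$, which is entirely independent of the parameters $(\alpha,r,\theta)$ at $x_{n_0}$. By Weyl's alternative together with the transfer matrices of Proposition~\ref{MSL} (modified to include the fixed point-interaction matrices $A_{\alpha_n,r_n,\theta_n}$ for those $x_n\in M$ lying to the left of $c$), one obtains a non-trivial solution $\phi_-$ of $\tau\phi_- = E\phi_-$ on $[a,c]$, unique up to scalar multiple, that satisfies the left boundary condition defining $D(H_{\Lambda,R,\Theta})$ — that is, $[v,\phi_-]_a=0$ in the lcc case, or the $L^2$ condition near $a$ in the lpc case. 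Since none of the data defining this condition depend on $(\alpha,r,\theta)$, the vector $(\phi_-(c),\phi_-'(c))^T$ is well defined up to scalar, and by ODE uniqueness together with $c\notin M$ it is nonzero. I would produce $\phi_+$ on $[d,b]$ analogously using the right boundary condition at $b$.

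Next I would define $\delta_0,\gamma_0\in[0,\pi)$ to be the unique angles satisfying
\[
\phi_-(c)\cos\delta_0 + \phi_-'(c)\sin\delta_0 = 0, \qquad \phi_+(d)\cos\gamma_0 + \phi_+'(d)\sin\gamma_0 = 0.
\]
These angles encode the projective classes $[(\phi_-(c),\phi_-'(c))^T]$ and $[(\phi_+(d),\phi_+'(d))^T]$ in $\RP$, which are intrinsic to $E$ and the frozen data and hence independent of the varying parameters.

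For the conclusion, fix $(\alpha,r,\theta)\in P(E)$ and let $u$ be a corresponding non-zero eigenfunction of $H_{\alpha,r,\theta,M}$. The restriction $u|_{[a,c]}$ solves the same equation, obeys the same point-interaction conditions at all $x_n\in M\cap[a,c)$, and meets the same left boundary condition as $\phi_-$, so by the uniqueness noted above it is a scalar multiple of $\phi_-$ on $[a,c]$; in particular $(u(c),u'(c))^T$ is collinear with $(\phi_-(c),\phi_-'(c))^T$, so $u$ satisfies the boundary condition encoded by $\delta_0$ at $c$, and analogously for $\gamma_0$ at $d$. Moreover $u|_{[c,d]}\not\equiv 0$, for otherwise continuity at $c$ would give $u(c)=u'(c)=0$, and propagating through the fixed transfer matrices and point-interaction matrices would force $u\equiv 0$ on $[a,b]$. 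Hence $u|_{[c,d]}$ is a non-trivial element of $D(H_{\alpha,r,\theta}^{\delta_0,\gamma_0})$ with $H_{\alpha,r,\theta}^{\delta_0,\gamma_0}\,(u|_{[c,d]}) = E\,u|_{[c,d]}$, proving $E\in\sigma_p(H_{\alpha,r,\theta}^{\delta_0,\gamma_0})$. The main subtlety is guaranteeing uniqueness (up to scalar) of $\phi_\pm$ in the lcc case, which rests on the fact that $v\notin D(H_{\Lambda,R,\Theta})$ so that $[v,\cdot]_a$ has one-dimensional kernel on the solution space; this is exactly what the cited characterization of the self-adjoint extension $H_{\Lambda,R,\Theta}$ provides.
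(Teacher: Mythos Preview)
The paper does not actually prove this lemma; it simply refers to Lemma~3.1 of \cite{RRAL}. Your argument is the natural one and is almost certainly what that reference does: the boundary behaviour at $a$ (resp.\ $b$), together with all the frozen interaction data on $[a,c]$ (resp.\ $[d,b]$), pins down a single projective direction at $c$ (resp.\ $d$), and that direction is your $\delta_0$ (resp.\ $\gamma_0$). The verification that any eigenfunction $u$ of $H_{\alpha,r,\theta,M}$ restricts to a nonzero element of $D(H_{\alpha,r,\theta}^{\delta_0,\gamma_0})$ is correct, including the propagation argument showing $u|_{[c,d]}\not\equiv 0$.

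One small point to tighten: in the limit-point case at $a$ you assert the existence of a nontrivial $L^2$ solution $\phi_-$ near $a$ without justification, and for an arbitrary real $E$ such a solution need not exist. The fix is immediate: if $P(E)=\emptyset$ the statement is vacuous and any $\delta_0,\gamma_0$ will do, while if $P(E)\neq\emptyset$ then any eigenfunction already furnishes the required $\phi_\pm$, after which uniqueness up to scalar follows from the limit-point alternative. With that one sentence added your proof is complete.
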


\begin{proof}
This follows as in Lemma 3.1 of \cite{RRAL}
\end{proof}

\begin{theorem}\label{teoad}
We have the following cases:
\begin{itemize}

\item [$a$)] If $\alpha = \alpha_0$ and $r = r_0$ are fixed, then $\{(\alpha_0, r_0, \theta) \in P(E)\}$ is empty or is countable.

\item[$b)$] If $\alpha = \alpha_0$ and $\theta = \theta_0$ are fixed, then $\{(\alpha_0, r, \theta_0) \in P(E)\}$ has at most one element or $\{(\alpha_0, r, \theta_0) \in P(E) \} = \{ \alpha_0 \} \times (0,\infty) \times \{ \theta_0 \}$.

\item[$c)$] If $r = r_0$ and $\theta = \theta_0$ are fixed, then $\{ (\alpha, r_0, \theta_0) \in P(E) \}$ has at most one element or $\{ (\alpha, r_0, \theta_0) \in P(E) \} = \mathbb{R} \times \{ r_0 \} \times \{ \theta_0 \}$.
\end{itemize}
\end{theorem}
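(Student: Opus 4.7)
The plan is to reduce from the global operator $H_{\Lambda,R,\Theta}$ (with parameters varied only at the index $n_0$) to the local single-interaction operator $H^{\delta_0,\gamma_0}_{\alpha,r,\theta}$ on $[c,d]$, and then apply the single-interaction dichotomy of Theorem~\ref{tmon}. Lemma~\ref{lmon} provides the one-way implication I need: every triple in $P(E)$ is automatically a local eigenvalue of $H^{\delta_0,\gamma_0}_{\alpha,r,\theta}$ for a fixed pair of boundary angles $\delta_0,\gamma_0$. Consequently
$$
\{(\alpha,r,\theta) \in P(E)\} \;\subseteq\; \{(\alpha,r,\theta) : E \in \sigma_p(H^{\delta_0,\gamma_0}_{\alpha,r,\theta})\},
$$
so it suffices to prove each of (a), (b), (c) for the local operator on the right, and then in the ``for all values'' branches to lift back to the global operator by exhibiting an explicit global eigenfunction.

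For part (a) I fix $\alpha=\alpha_0$ and $r=r_0$ and apply Theorem~\ref{tmon}(a) to the local operator: the set of $\theta$ giving a local eigenvalue is either empty or of the form $\{\theta^* + k\pi : k \in \mathbb{Z}\}$ for some $\theta^*$, hence countable. Intersecting with $P(E)$ yields the claim immediately.

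For parts (b) and (c) I assume the set in question is not at most a singleton, that is, there exist two distinct parameter values in it, and I must conclude that \emph{every} parameter value belongs to it. Via Lemma~\ref{lmon} the two distinct values give two distinct local eigenvalue parameters, and Theorem~\ref{tmon}(b) (respectively (c)) then forces the ``all values'' branch of the local dichotomy. The proofs of Theorem~\ref{tmon}(b) and (c) also describe an explicit eigenfunction for the local operator at the new parameter: in (c) the very same function works, and in (b) it is the original eigenfunction scaled by an explicit constant on one side of $x_{n_0}$ (the factor depending on whether $(u(x_{n_0}-),u'(x_{n_0}-))^T$ is proportional to $(\sin\theta_0,\cos\theta_0)^T$ or to $(\cos\theta_0,-\sin\theta_0)^T$). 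To lift, I start with a global eigenfunction $u$ for one of the original parameter values and apply the same local recipe (identity for (c), one-sided scaling for (b)) to define a candidate function $w$ on all of $(a,b)$.

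The main obstacle is verifying that $w$ lies in the domain of the global selfadjoint operator with the new parameter. Square-integrability and local absolute continuity away from $M$ follow immediately from the fact that $w$ is a scalar multiple of $u$ on each side of $x_{n_0}$. The matching conditions at the other interaction points $x_n \ne x_{n_0}$ are preserved because, for such $x_n$, the same scalar factor is applied on both sides of $x_n$, and scaling by a constant commutes with the linear matching map $A_{\alpha_n,r_n,\theta_n}$. The boundary conditions at $a$ and $b$ (in either the regular or the limit-circle form) are homogeneous-linear in $f$, hence preserved by multiplication by a constant on the relevant side; in particular $[v,cw]_a = c[v,w]_a = 0$. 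Finally, the matching condition at $x_{n_0}$ with the new parameter is exactly what the calculation in the proof of Theorem~\ref{tmon} establishes. Together these checks show $w$ is a global eigenfunction for the new parameter, so the ``all values'' branch is realized globally and parts (b) and (c) follow.
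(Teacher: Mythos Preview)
Your proof is correct and follows the same reduction-to-local-operator approach as the paper, via Lemma~\ref{lmon} and Theorem~\ref{tmon}. You are in fact more careful than the paper in making explicit the lifting step in parts (b) and (c), where one must pass from the local ``all values'' branch back to a genuine global eigenfunction by the one-sided scaling (resp.\ identity) recipe; the paper's proof asserts this conclusion directly from Theorem~\ref{tmon} without spelling out the verification you supply.
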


\begin{proof}
\hfill
   \begin{itemize}

       \item [$a)$] Suppose that for some $\theta$, $(\alpha_0, r_0, \theta) \in P(E)$. Then by Lemma~\ref{lmon}, $E \in \sigma_p (H_{\alpha_0, r_0, \theta}^{\delta_0, \gamma_0})$. By Theorem~\ref{tmon} $a)$, this implies $(\alpha_0, r_0, \tilde \theta) \in P(E)$ if and only if $\tilde \theta = \theta + k \pi$, $k \in \mathbb{Z}$. Therefore the set $\{ (\alpha_0, r_0, \theta) \in P(E) \}$ is countable.
       \item[$b)$] Suppose that for some $r$, $(\alpha_0, r, \theta_0) \in P(E)$. Then by Lemma~\ref{lmon}, $E \in \sigma_p (H_{\alpha_0, r, \theta_0}^{\delta_0, \gamma_0})$. By Theorem~\ref{tmon} $b)$, one has $(\alpha_0, \tilde r, \theta _0) \not\in P(E)$, $\forall \tilde r\not= r$ or $(\alpha_0, \tilde r, \theta _0) \in P(E)$, $\forall \tilde r > 0$. Therefore the assertion follows.
       \item[c)] Suppose that for some $\alpha$, $(\alpha, r_0, \theta_0) \in P(E)$. Then by Lemma~\ref{lmon}, $E \in \sigma_p (H_{\alpha, r_0, \theta_0}^{\delta_0, \gamma_0})$. By Theorem~\ref{tmon} $c)$, one has $(\tilde \alpha, r_0, \theta _0) \not\in P(E)$, $\forall \tilde \alpha \not= \alpha$ or $(\tilde \alpha, r_0, \theta _0) \in P(E)$, $\forall \tilde \alpha \in \mathbb{R}$. Therefore the assertion follows.

   \end{itemize}
\end{proof}	

\begin{remark}\label{remad}
Observe that in $b)$ of Theorem~\ref{teoad}, if the eigenvector associated to $E$ is such that $u(x_{n_0}-) = \cos \theta_{n_0}$ and  $u'(x_{n_0}-) = -\sin \theta_{n_0}$ or $u(x_{n_0}-) = \sin \theta_{n_0}$ and $u'(x_{n_0}-) = \cos \theta_{n_0}$, then $\{ (\alpha_0, r, \theta_0) \in P(E) \} = \{ \alpha_0 \} \times (0, \infty) \times \{ \theta_0 \}$, otherwise $\{ (\alpha_0, r, \theta_0) \in P(E) \}$ has at most one element. In case $c)$ of the same theorem, if the eigenvector associated to $E$ is such that $u(x_{n_0}-) = \cos \theta_{n_0}$ and $u'(x_{n_0}-) = -\sin \theta_{n_0}$, then $\{ (\alpha, r_0, \theta_0) \in P(E) \} = \mathbb{R} \times \{ r_0 \} \times \{ \theta_0 \}$, otherwise $\{ (\alpha, r_0, \theta_0) \in P(E) \}$ has at most one element.
\end{remark}


\section{Sturm-Liouville Operators with Random Point Interactions}\label{s.6}

In this section we use the previously obtained results to study the random case.
First the probability space $\Omega$ where the sequences of coupling constants live is constructed and then our random operators are defined.\\

The space of real valued sequences $\{\omega_n\}_{n\in I}$, where $I\subseteq \mathbb{Z}$, will be denoted by $\mathbb{R}^I$. We introduce a measure in $\mathbb{R}^I$ in the following way. Let $\{p_n\}_{n\in I}$ be a sequence of probability measures in $\mathbb{R}$ and consider the product measure  $\mathbb{P}=\times_{n\in I}p_n$ defined on the product $\sigma$-algebra $\mathcal{F}$ of $\mathbb{R}^I$ generated by the cylinder sets, that is, by the sets of the form $\{\omega:\omega(i_1)\in A_1,\dots,\omega(i_n)\in A_n\}$ for $i_1,\dots,i_n\in I$, where $A_1,\dots, A_n$ are Borel sets in $\mathbb{R}$. In this way a measure space $\Omega=(\mathbb{R}^I,\mathcal{F},\mathbb{P})$ is constructed. 
See chapter 1, section 1 in \cite{PF}.  In some cases we may require for the measure space $\Omega$ to be complete, i.e. subsets of sets of measure zero are measurable. Every measurable space can be completed, see Theorem 1.36 \cite{WR}.
\\

If we fix $R$ and $\Theta$, and let $\Lambda\in\mathbb{R}^I$, we denote the operator $H_{\Lambda,R,\Theta}$ as $H_\Lambda$ and analogously $H_R$ and $H_\Theta$ when the parameters $\Lambda$ and $\Theta$ or $\Lambda$ and $R$ are fixed respectively, see Remark \ref{fixed}. Assume moreover the limit point occurs at $a$ or that $\tau_{\Lambda,R,\Theta}$ is regular at $a$ and the same possibilities for $b$ (see Definition \ref{regu}).\\

Let $\Omega_1=(\mathbb{R}^I,\mathcal{F}_1,\mathbb{P}_1)$, $\Omega_2=((0,\infty)^I,\mathcal{F}_2,\mathbb{P}_2)$ and $\Omega_3=(\mathbb{R}^I,\mathcal{F}_3,\mathbb{P}_3)$ be probability spaces constructed as described above.

\begin{definition}\label{pes}
    For any $E\in\mathbb{R}$, we define
    $$P_{R,\Theta}(E):=\{\Lambda\in\Omega_1: E\in\sigma_p(H_\Lambda )\}$$
    $$P_{\Lambda,\Theta}(E):=\{R \in\Omega_2: E\in\sigma_p(H_R )\}$$
    $$P_{\Lambda,R}(E):=\{\Theta\in\Omega_3: E\in\sigma_p(H_\Theta )\}$$
\end{definition}

We shall prove the following theorem.
\begin{theorem}\label{isornot}

 Assume $\Omega_1$ is complete and $\mathbb{P}_1= \times_{n\in I}p_n$ is such that $p_n$ are continuous measures for all $n\in I$. Let $E\in \mathbb{R}$ fixed and $B$ any measurable subset of $P_{R,\Theta}(E)$. Then one of the following options hold:
\begin{itemize}
	\item[$i)$] $\mathbb{P}_1(B)=0$
	\item[$ii)$] $P_{R,\Theta}(E)=\Omega_1$
\end{itemize}
\end{theorem}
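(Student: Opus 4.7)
The plan is to assume that some measurable $B \subseteq P_{R,\Theta}(E) =: A$ has $\mathbb{P}_1(B) > 0$ and to deduce $A = \Omega_1$. The engine combines Fubini's theorem with the dichotomy from part $(c)$ of Theorem~\ref{teoad} and the atomlessness of each $p_n$.

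First I would record the key structural fact: by Lemma~\ref{lmon} followed by part $(c)$ of Theorem~\ref{teoad}, for every $n \in I$ and every $\Lambda^{(n)} \in \Omega_1^{(n)}$ the section $A^{\Lambda^{(n)}} := \{\alpha \in \mathbb{R} : (\alpha, \Lambda^{(n)}) \in A\}$ is empty, a single point, or all of $\mathbb{R}$. Fixing $n \in I$ and applying Fubini,
\[
0 < \mathbb{P}_1(B) = \int_{\Omega_1^{(n)}} p_n(B^{\Lambda^{(n)}}) \, d\mathbb{P}_1^{(n)}(\Lambda^{(n)}),
\]
so the measurable set $D_n := \{\Lambda^{(n)} : p_n(B^{\Lambda^{(n)}}) > 0\}$ has positive $\mathbb{P}_1^{(n)}$-measure. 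Since $p_n$ has no atoms, for $\Lambda^{(n)} \in D_n$ the section $B^{\Lambda^{(n)}}$ contains more than one point, and the dichotomy then forces $A^{\Lambda^{(n)}} = \mathbb{R}$. This gives a measurable cylinder $D_n \times \mathbb{R} \subseteq A$ of positive $\mathbb{P}_1$-measure in the $n$-th direction.

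Next, the ``non-cylindrical'' part of $A$ is $\mathbb{P}_1$-null in every coordinate: when $A^{\Lambda^{(n)}}$ is a singleton, continuity of $p_n$ makes it $p_n$-null, so $A$ equals $T_n \times \mathbb{R}$ modulo $\mathbb{P}_1$-null sets, where $T_n := \{\Lambda^{(n)} : A^{\Lambda^{(n)}} = \mathbb{R}\}$. Since this holds for every $n \in I$, $A$ coincides modulo null sets with a tail event in the independent product space $\Omega_1$; Kolmogorov's $0$-$1$ law (which uses the assumed independence but not identical distribution) then yields $\mathbb{P}_1(A) \in \{0,1\}$, and the hypothesis $\mathbb{P}_1(B) > 0$ pins $\mathbb{P}_1(A) = 1$. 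An induction on $|F|$ for finite $F \subseteq I$ iterates the cylinder construction and shows that $T_F := \{\Lambda^{(F)} : A^{\Lambda^{(F)}} = \mathbb{R}^F\}$ has full $\mathbb{P}_1^{(F)}$-measure and $T_F \times \mathbb{R}^F \subseteq A$.

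The main obstacle is the final upgrade from the measure-theoretic identity $\mathbb{P}_1(A) = 1$ to the pointwise equality $A = \Omega_1$. Given any $\Lambda_* \in \Omega_1$, if $\Lambda_* \notin A$ then part $(c)$ of Theorem~\ref{teoad} forces $A^{\Lambda_*^{(n)}}$ to be empty or a singleton distinct from $\alpha_n^*$ (the case $\mathbb{R}$ being excluded, as it would place $\Lambda_*$ in $A$), and similarly $\Lambda_*^{(F)} \notin T_F$ for every finite $F$. I would aim to contradict this by invoking Remark~\ref{remad}, which characterizes the ``full-$\mathbb{R}$'' case geometrically in terms of a specific Iwasawa angle of the eigenfunction at $x_n$: propagating this angle condition along the full-measure sets $T_F$, together with the completeness of $\Omega_1$, should rule out any such exceptional $\Lambda_*$.
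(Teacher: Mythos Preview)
Your route via Kolmogorov's $0$--$1$ law reaches at best $\mathbb{P}_1(P_{R,\Theta}(E))=1$, and the ``main obstacle'' you flag---upgrading this to the pointwise equality $P_{R,\Theta}(E)=\Omega_1$---is a genuine gap that your final paragraph does not close. Knowing that each $T_F$ has full $\mathbb{P}_1^{(F)}$-measure does not by itself exclude a specific $\Lambda_*\notin A$; the sentence ``propagating this angle condition along the full-measure sets $T_F$\ldots should rule out any such exceptional $\Lambda_*$'' is not an argument. What actually closes the gap is a pointwise observation you never state: if for a \emph{single} $\Lambda$ the eigenfunction $u_\Lambda$ satisfies $[(u_\Lambda(x_n-),u'_\Lambda(x_n-))^T]=[(\cos\theta_n,-\sin\theta_n)^T]$ for \emph{every} $n\in I$, then a direct computation (as in Theorem~\ref{tmon}(c)) shows $A_{\alpha,r_n,\theta_n}(u_\Lambda(x_n-),u'_\Lambda(x_n-))^T$ is independent of $\alpha$, so the same function $u_\Lambda$ lies in $D(H_{\Lambda'})$ for \emph{every} $\Lambda'\in\Omega_1$, forcing $A=\Omega_1$. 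There is also an unaddressed measurability issue: you repeatedly apply Fubini and Kolmogorov to $A=P_{R,\Theta}(E)$ and to the sets $T_n$, but the hypothesis only gives measurability of $B$, not of $A$.

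The paper avoids both difficulties by arguing the contrapositive directly, with no $0$--$1$ law. Assume $ii)$ fails, so some $\Lambda_0\notin A$ exists. For $n\in I$ set
\[
Q_{n,E}:=\{\Lambda\in B:\ [(u_\Lambda(x_n-),u'_\Lambda(x_n-))^T]\neq[(\cos\theta_n,-\sin\theta_n)^T]\}.
\]
Lemma~\ref{qmes} (your own Fubini/continuity step, applied only to the measurable set $B$) gives $\mathbb{P}_1(Q_{n,E})=0$. The pointwise observation above, used in contrapositive form, shows that any $\Lambda\in B$ must lie in some $Q_{n,E}$: otherwise its eigenfunction would have the special direction at every $x_n$ and hence be an eigenfunction for $H_{\Lambda_0}$, contradicting $\Lambda_0\notin A$. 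Thus $B\subseteq\bigcup_{n\in I}Q_{n,E}$ and $\mathbb{P}_1(B)=0$. This is shorter, uses only the measurability you are given, and delivers the pointwise conclusion $A=\Omega_1$ (in the complementary case) without any upgrade step.
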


\begin{remark}
We will show that in some cases there is always a set of point interactions $M$ where option $ii)$ happens. See Theorem \ref{teotal} below.
\end{remark}

\begin{remark}
An analogous result holds for $P_{\Lambda,\Theta}(E)$.
\end{remark}

Before proving Theorem \ref{isornot}  we shall prove the following lemma, where Definition \ref{eqqclass} is used.

\begin{lemma}\label{qmes}
For any measurable $B \subseteq P_{R, \Theta}$ and any $n \in I$, set
$$
Q_{n,E} := \{ \Lambda \in B : \exists u_\Lambda \in D(H_\Lambda) \setminus \{ 0 \},\, H_\Lambda u_\Lambda = E u_\Lambda \mbox{ and } [(u_\Lambda(x_n-) ,u'_\Lambda(x_n-))^T]\not= [(\cos \theta_n, -\sin \theta_n)^T] \}.
$$
Then $Q_{n,E}$ is measurable and $\mathbb{P}_1(Q_{n,E})=0$.
\end{lemma}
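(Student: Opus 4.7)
My plan is to use Lemma~\ref{lmon} to localize the problem to a small interval $[c,d]$ containing only the point $x_n$, thereby reducing everything to a one-parameter family in $\alpha_n$ to which Theorem~\ref{tmon}(c) applies. The key observation I will exploit is that, after localization, the restricted operator depends only on $\alpha_n$ and not on the remaining coordinates $\hat\Lambda := (\alpha_m)_{m \in I \setminus \{n\}}$. Consequently, the ``generic'' value produced by Theorem~\ref{tmon}(c) is a single real number, independent of $\hat\Lambda$, and this collapses $Q_{n,E}$ into a cylinder with one fixed coordinate.

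First I would fix an interval $[c,d] \subset (a,b)$ with $[c,d] \cap M = \{x_n\}$. By Lemma~\ref{lmon} there are angles $\delta_0, \gamma_0 \in [0,\pi)$ (depending on $E$, $R$, $\Theta$ but not on $\Lambda$) such that every $\Lambda \in P_{R,\Theta}(E)$ satisfies $E \in \sigma_p(H_{\alpha_n, r_n, \theta_n}^{\delta_0, \gamma_0})$. Since $r_n$, $\theta_n$, $V|_{[c,d]}$, and the boundary angles $\delta_0, \gamma_0$ are all independent of $\hat\Lambda$, this restricted operator depends only on the single parameter $\alpha_n$.

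Next I would apply Theorem~\ref{tmon}(c) to this one-parameter family. The set $S_E := \{\alpha \in \mathbb{R} : E \in \sigma_p(H_{\alpha, r_n, \theta_n}^{\delta_0, \gamma_0})\}$ is either empty, a singleton, or all of $\mathbb{R}$. By Remark~\ref{remad}, the third alternative corresponds exactly to eigenvectors whose projective class at $x_n-$ equals $[(\cos\theta_n, -\sin\theta_n)^T]$. Thus, for $\Lambda \in Q_{n,E}$ this third alternative is excluded by the very definition of $Q_{n,E}$, and since $\alpha_n \in S_E$ by construction, we must have $S_E = \{\alpha^*\}$ for some $\alpha^* \in \mathbb{R}$, forcing $\alpha_n = \alpha^*$. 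Crucially, $\alpha^*$ depends on $E$, $\delta_0$, $\gamma_0$, $r_n$, $\theta_n$ and $V|_{[c,d]}$, but not on $\hat\Lambda$.

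Finally, $Q_{n,E} \subseteq \{\Lambda \in \mathbb{R}^I : \alpha_n = \alpha^*\}$, which is a measurable cylinder in the product $\sigma$-algebra with $\mathbb{P}_1$-measure equal to $p_n(\{\alpha^*\}) = 0$ since $p_n$ is continuous. Completeness of $\Omega_1$ then guarantees that $Q_{n,E}$ itself is measurable with $\mathbb{P}_1(Q_{n,E}) = 0$. The main obstacle is the seemingly innocuous claim that $\alpha^*$ does not depend on $\hat\Lambda$: this relies entirely on Lemma~\ref{lmon} providing uniform $\delta_0, \gamma_0$ across all of $P_{R,\Theta}(E)$. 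Without that uniformity one would be forced into a Fubini-type argument on a graph over $\hat\Lambda$, requiring a separate measurability proof that is considerably more delicate.
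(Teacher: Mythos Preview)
Your argument has a genuine gap at precisely the point you flag as ``the main obstacle''. Lemma~\ref{lmon} is stated in the context of Section~\ref{s.5}, where \emph{all} coupling constants $\alpha_m$, $r_m$, $\theta_m$ with $m \neq n_0$ are held fixed; the angles $\delta_0, \gamma_0$ it produces are obtained by propagating the boundary data at $a$ (resp.\ $b$) through the transfer matrices and point-interaction matrices $A_{\alpha_m, r_m, \theta_m}$ for those $x_m$ lying in $(a,c)$ (resp.\ $(d,b)$). Those matrices depend on the $\alpha_m$'s. In Section~\ref{s.6} the whole sequence $\Lambda = (\alpha_m)_{m \in I}$ is variable, so when you invoke Lemma~\ref{lmon} you must first freeze $\hat\Lambda = (\alpha_m)_{m \neq n}$, obtaining $\delta_0 = \delta_0(\hat\Lambda)$ and $\gamma_0 = \gamma_0(\hat\Lambda)$. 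Your claim that $\delta_0, \gamma_0$ depend only on $E, R, \Theta$ is therefore false as soon as there is at least one other interaction point between $a$ and $c$ or between $d$ and $b$. Consequently the putative $\alpha^*$ is really a function $\alpha^*(\hat\Lambda)$, and $Q_{n,E}$ is not contained in a single coordinate hyperplane but in the graph $\{\Lambda : \alpha_n = \alpha^*(\hat\Lambda)\}$.

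This forces exactly the Fubini argument you dismissed, and that is what the paper does. Rather than attempting to prove measurability of the map $\hat\Lambda \mapsto \alpha^*(\hat\Lambda)$, the paper works with the measurable function $\chi_B$: for each fixed $\hat\Lambda$, Remark~\ref{remad} (which packages the consequence of Lemma~\ref{lmon} and Theorem~\ref{tmon}(c) you use) guarantees that the slice $\{\alpha_n : \Lambda \in B\}$ is at most a singleton whenever $\Lambda \in Q_{n,E}$, so $f(\hat\Lambda) := \int \chi_B \, dp_n = 0$ by continuity of $p_n$. Fubini then yields $\mathbb{P}_1\big([f^{-1}(\{0\}) \times \mathbb{R}] \cap B\big) = 0$, and completeness of $\Omega_1$ gives measurability and vanishing measure of $Q_{n,E}$ as a subset. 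Your instinct about the underlying dichotomy is right; what is missing is that the ``critical value'' varies with $\hat\Lambda$, and the measurability has to be routed through $\chi_B$ and Fubini rather than through a fixed cylinder.
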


\begin{proof}
Let
\[
\begin{array}{ccc}
\chi_B(\Lambda) & = & \left\{ \begin{array}{cc} 1 & \mbox{ if } \Lambda \in B, \\ 0 & \mbox{ if } \Lambda \not\in B. \end{array} \right.
\end{array}
\]

If $\Lambda \in Q_{n,E}$, then from the definition of $Q_{n,E}$ it follows that $\chi_B(\Lambda)=1$. \\

Let $f : \mathbb{R}^{I \backslash \{n\}} \rightarrow [0,\infty)$ be defined by
$$
f(\tilde\Lambda) := \int_\mathbb{R} \chi_B(\Lambda) \, dp_{n}(\Lambda(n)),
$$
where $\tilde \Lambda = \sum \limits_{k \in I \backslash \{n\}} \Lambda(k) e(k)$. Here $e(k) = (e_m)_{m \in I}$ are the canonical vectors with entries $e_m = 0$ if $k \not= m$ and $e_k = 1$. The measurability of $f$ follows from Fubini's Theorem. (See \cite[Theorem 7.8]{WR}.)

If $\Lambda = \sum \limits_{k \in I} \Lambda(k) e(k) \in Q_{n,E}$, then $f(\tilde \Lambda) = 0$, where $\tilde \Lambda = \sum \limits_{k \in I \backslash \{n\}} \Lambda(k) e(k)$. This follows from Remark~\ref{remad} since $p_n$ is continuous. \\

Hence $Q_{n,E} \subseteq [f^{-1}(\{0\}) \times \mathbb{R}] \cap B$.\\

Now, using Fubini,
$$
\int_{f^{-1}(\{0\}) \times \mathbb{R}} \chi_B(\Lambda) \, d\mathbb{P}_1 = \int_{f^{-1}(\{0\})} \, d\mathbb{P}_1(\tilde\Lambda) \int_\mathbb{R} \chi_B(\Lambda) \, dp_n(\Lambda(n)) = \int_{f^{-1}(\{0\})} f(\tilde \Lambda) \, d\mathbb{P}_1(\tilde \Lambda) = 0.
$$

Then,
$$
\int_{[f^{-1}(\{0\}) \times \mathbb{R}] \cap B} \chi_B(\Lambda) \, d\mathbb{P}_1 = 0,
$$
and since $\chi_B(\Lambda)=1$ in $B$, we get $\mathbb{P}_1([f^{-1}(\{0\}) \times \mathbb{R}] \cap B) = 0$.\\

Since the measure $d\mathbb{P}_1$ is complete, any subset of a measurable set of measure zero is measurable with measure zero. Therefore $Q_{n,E}$ is measurable.
\end{proof}

\begin{proof}[Proof of Theorem \ref{isornot}]
It will be enough to prove that if $ii)$ doesn't hold, then $i)$ must hold.\\

Assume that there exists $\Lambda_0 \in \Omega_1$ such that $E$ is not an eigenvalue of $H_{\Lambda_0}$. \\

If $E$ is not an eigenvalue of $H_\Lambda$ for every $\Lambda \in \Omega_1$, then $\mathbb P_1(B) = 0$ and the result follows. \\

Suppose now $\Lambda \in B$, then $E\in\sigma_p(H_\Lambda)$, i.e. there exist $u_\Lambda\in D(H_\Lambda) \setminus \{ 0 \}$ such that $H_\Lambda u_\Lambda = E u_\Lambda$. Then $\Lambda \in Q_{n,E}$ for some $n\in I$. This follows because if $[(u_\Lambda(x_n-) ,u'_\Lambda(x_n-))^T]= [(\cos \theta_n, -\sin \theta_n)^T]$ for every $n\in I$, then there exist $c_n\in\mathbb{R}$ such that  $(u(x_n-),u'(x_n-))^T=c_n(\cos\theta,-\sin\theta)$, hence
$$
A_{\Lambda(n), r_n, \theta_n} \begin{pmatrix} u(x_n-) \\ u'(x_n-) \end{pmatrix} = A_{\Lambda(n), r_n, \theta_n}c_n \begin{pmatrix} \cos \theta_n \\ -\sin \theta_n \end{pmatrix} = c_nr_n \begin{pmatrix} 1 & \Lambda(n) \\ 0 & 1 \end{pmatrix} \begin{pmatrix} 1 \\ 0 \end{pmatrix} =c_n r_n \begin{pmatrix} 1 \\ 0 \end{pmatrix}
$$
Since the right hand side does not depend on $\Lambda$, from the definition of $H_\Lambda$, $E$ must be an eigenvalue of $H_\Lambda$ for all $\Lambda \in \Omega_1$,  in particular $E$ is an eigenvalue of $H_{\Lambda_0}$, cf. proof Theorem~\ref{tmon} $c)$,  which is not possible by our initial assumption. Therefore
$$
B \subset \bigcup_{n \in I} Q_{n,E},
$$
where $Q_{n,E}$ was defined in Lemma~\ref{qmes}. Using that lemma we obtain $\mathbb{P}_1(\bigcup \limits_{n \in I} Q_n) = 0$. Therefore the result follows.
\end{proof}

\begin{theorem}\label{isoris}
Assume $\mathbb{P}_3 = \times_{n \in I}q_n$ is such that $q_{n_0}$ is a continuous measure for some $n_0 \in I$. Let $E \in \mathbb{R}$ be fixed and let $B$ be any measurable subset of $P_{\Lambda, R}(E)$. Then $\mathbb P_3 (B) = 0$.
\end{theorem}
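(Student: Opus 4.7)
The plan is to follow the same Fubini scheme as in the proof of Theorem~\ref{isornot}, but to exploit the stronger rigidity of the elliptic parameter. In contrast to the dichotomy (``isolated point vs.\ whole line'') that parts (b) and (c) of Theorem~\ref{teoad} provide for the hyperbolic and parabolic parameters, part (a) of that theorem unconditionally forces the set of admissible $\theta$'s, with the remaining parameters fixed, to be at most countable. This means that no dichotomy appears in the conclusion---the assertion is simply $\mathbb{P}_3(B)=0$---and there is no need for an analogue of Lemma~\ref{qmes} isolating a geometric condition on the eigenvector.

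Concretely, I would factor $\mathbb{P}_3 = q_{n_0} \times \mathbb{P}_3'$, where $\mathbb{P}_3' := \times_{k \in I \setminus \{n_0\}} q_k$, and use Fubini to write
$$
\mathbb{P}_3(B) = \int_{\mathbb{R}^{I\setminus\{n_0\}}} q_{n_0}(B_{\tilde\Theta}) \, d\mathbb{P}_3'(\tilde\Theta),
$$
where the section
$$
B_{\tilde\Theta} := \{ \theta \in \mathbb{R} : \tilde\Theta + \theta\,e(n_0) \in B \}
$$
is measurable in the Borel $\sigma$-algebra of $\mathbb{R}$ by Fubini's theorem. For a fixed $\tilde\Theta$, every $\theta \in B_{\tilde\Theta}$ gives $E \in \sigma_p(H_\Theta)$ with $\Theta(n_0) = \theta$ and the remaining coordinates determined by $\tilde\Theta$. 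By Lemma~\ref{lmon} this yields $E \in \sigma_p(H^{\delta_0,\gamma_0}_{\alpha_{n_0}, r_{n_0}, \theta})$, so $(\alpha_{n_0}, r_{n_0}, \theta) \in P(E)$, and Theorem~\ref{teoad}(a) forces $B_{\tilde\Theta}$ to be empty or contained in a coset of $\pi\mathbb{Z}$. In either case it is countable.

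Since $q_{n_0}$ is a continuous measure, every countable subset of $\mathbb{R}$ has $q_{n_0}$-measure zero, so $q_{n_0}(B_{\tilde\Theta}) = 0$ for every $\tilde\Theta$, and the integral above vanishes, giving $\mathbb{P}_3(B) = 0$. The only potential obstacle is bookkeeping: one must verify that the sections $B_{\tilde\Theta}$ are measurable, but this is standard for measurable sets in a product $\sigma$-algebra. I do not expect any genuine difficulty here, since the argument is a strictly simpler variant of the proof of Theorem~\ref{isornot}: the rigidity built into Theorem~\ref{teoad}(a) is already unconditional, so unlike the case of $\Lambda$ there is no exceptional configuration of the eigenvector at $x_{n_0}$ that has to be ruled out separately, and no completeness hypothesis on $\Omega_3$ is needed.
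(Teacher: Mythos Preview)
Your proposal is correct and follows essentially the same route as the paper: both arguments factor $\mathbb{P}_3$ through the $n_0$-th coordinate, use Fubini to reduce to the one-dimensional sections $B_{\tilde\Theta}$ (the paper phrases this via the function $f(\tilde\Theta)=\int_{\mathbb{R}}\chi_B(\Theta)\,dq_{n_0}(\Theta(n_0))$, which is exactly $q_{n_0}(B_{\tilde\Theta})$), and then invoke Theorem~\ref{teoad}(a) together with the continuity of $q_{n_0}$ to show each section has $q_{n_0}$-measure zero. Your observation that no completeness hypothesis on $\Omega_3$ and no analogue of Lemma~\ref{qmes} are needed is accurate and matches the paper's treatment.
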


\begin{proof}
Let
\[
\begin{array}{ccc} \chi_B(\Theta) & = & \left\{ \begin{array}{cc} 1 & \mbox{ if } \Theta \in B, \\ 0 & \mbox{ if } \Theta \not\in B, \end{array} \right. \end{array}
\]
and define $f : \mathbb{R}^{I \backslash \{n_0\}} \rightarrow [0,\infty)$ as
$$
f(\tilde \Theta) := \int_\mathbb{R} \chi_B(\Theta) \, dq_{n_0}(\Theta(n_0)),
$$
where $\tilde \Theta = \sum \limits_{k \in I \backslash \{n_0\}} \Theta(k) e(k)$. Here $e(k) = (e_m)_{m \in I}$ are the canonical vectors with entries $e_m = 0$ if $k \not= m$ and $e_k=1$. The measurability of $f$ follows from Fubini's Theorem. (See \cite[Theorem 7.8]{WR}.)

If $\Theta = \sum \limits_{k \in I} \Theta(k) e(k) \in B$, then $f(\tilde \Theta) = 0$, where $\tilde \Theta = \sum \limits_{k \in I \backslash \{n\}} \Theta(k) e(k)$. This follows from Theorem~\ref{teoad} since $q_{n_0}$ is continuous.\\

Hence $B\subseteq [f^{-1}(\{0\}) \times \mathbb{R}]$.\\

Now, using  Fubini,
$$
\int_{f^{-1}(\{0\}) \times \mathbb{R}} \chi_B(\Theta) \, d\mathbb{P}_3 = \int_{f^{-1}(\{0\})} \, d\mathbb{P}_3(\tilde \Theta) \int_\mathbb{R} \chi_B(\Theta) \, dq_{n_0}(\Theta(n_0)) = \int_{f^{-1}(\{0\})} f(\tilde \Theta) \, d\mathbb{P}_3(\tilde \Theta) = 0.
$$

Then, $\mathbb{P}_3([f^{-1}(\{0\}) \times \mathbb{R}]) = 0$. Therefore $\mathbb{P}_3(B) = 0$.
\end{proof}

\begin{definition}
For any $E \in \mathbb{R}$, we define
$$
P(E) := \{ (\Lambda, R, \Theta) \in \Omega_1 \times \Omega_2 \times \Omega_3 : E \in \sigma_p (H_{\Lambda, R, \Theta}) \}.
$$
\end{definition}

\begin{theorem}
Assume $\mathbb{P}_3 = \times_{n \in I}q_n$ is such that $q_{n_0}$ is a continuous measure for some $n_0 \in I$. Let $E \in \mathbb{R}$ be fixed and suppose that $P(E)$ is measurable. Let $\mathbb{P} = \mathbb{P}_1 \times \mathbb{P}_2 \times \mathbb{P}_3$. Then,
$$
\mathbb{P}(P(E)) = 0.
$$
\end{theorem}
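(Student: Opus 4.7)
The plan is to reduce the statement to Theorem~\ref{isoris} via Fubini's theorem, using the fact that the fiber of $P(E)$ over a fixed pair $(\Lambda,R)\in\Omega_1\times\Omega_2$ coincides with $P_{\Lambda,R}(E)$.

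First, I would fix $E\in\mathbb{R}$ and observe that for every $(\Lambda,R)\in\Omega_1\times\Omega_2$, the $\Theta$-section of $P(E)$, namely
$$
P(E)_{(\Lambda,R)} := \{\Theta\in\Omega_3 : (\Lambda,R,\Theta)\in P(E)\},
$$
is by definition equal to $P_{\Lambda,R}(E)$ from Definition~\ref{pes} (with the roles of the fixed and varying parameters matching up). Since $P(E)$ is assumed measurable with respect to the product $\sigma$-algebra on $\Omega_1\times\Omega_2\times\Omega_3$ and $\mathbb{P}=\mathbb{P}_1\times\mathbb{P}_2\times\mathbb{P}_3$ is a product of probability measures, Fubini's theorem (see \cite[Theorem 7.8]{WR}) guarantees that for $(\mathbb{P}_1\times\mathbb{P}_2)$-almost every $(\Lambda,R)$, the section $P(E)_{(\Lambda,R)}$ is $\mathbb{P}_3$-measurable.

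Next, for every $(\Lambda,R)$ such that $P(E)_{(\Lambda,R)}$ is measurable, I apply Theorem~\ref{isoris} with the choice $B:=P(E)_{(\Lambda,R)}=P_{\Lambda,R}(E)$, which is a measurable subset of itself. The hypothesis on the continuity of $q_{n_0}$ is part of the assumptions of the present theorem, so Theorem~\ref{isoris} applies directly and yields
$$
\mathbb{P}_3\bigl(P(E)_{(\Lambda,R)}\bigr)=0
$$
for $(\mathbb{P}_1\times\mathbb{P}_2)$-almost every $(\Lambda,R)$.

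Finally, by Tonelli/Fubini for the nonnegative indicator $\chi_{P(E)}$,
$$
\mathbb{P}(P(E)) = \int_{\Omega_1\times\Omega_2}\!\!\mathbb{P}_3\bigl(P(E)_{(\Lambda,R)}\bigr)\,d(\mathbb{P}_1\times\mathbb{P}_2)(\Lambda,R) = 0,
$$
which is the desired conclusion. The only delicate point is the measurability of the sections, which is standard and follows from the measurability assumption on $P(E)$; no additional structure on $\mathbb{P}_1$ or $\mathbb{P}_2$ is needed, exactly as in the statement.
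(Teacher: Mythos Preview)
Your proof is correct and follows essentially the same approach as the paper: both apply Fubini to write $\mathbb{P}(P(E))$ as an iterated integral over $\Omega_1\times\Omega_2$ of $\mathbb{P}_3(P_{\Lambda,R}(E))$, and then invoke Theorem~\ref{isoris} to conclude that the inner integral vanishes. Your version is slightly more explicit about the a.e.\ measurability of the sections, but otherwise the argument is identical.
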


\begin{proof}
Let
\[
\begin{array}{ccc} \chi_{P(E)} (\Lambda, R, \Theta) & = & \left\{ \begin{array}{cc} 1 & \mbox{ if }(\Lambda, R, \Theta) \in P(E), \\ 0 & \mbox{ if }(\Lambda, R, \Theta) \not\in P(E). \end{array} \right. \end{array}
\]
Then,
$$
\mathbb{P}(P(E)) = \int_{\Omega_1 \times \Omega_2 \times \Omega_3} \chi_{P(E)}(\Lambda, R, \Theta) \, d\mathbb{P}.
$$
Using Fubini we have
$$
\int_{\Omega_1 \times \Omega_2 \times \Omega_3} \chi_{P(E)}(\Lambda, R, \Theta) \, d\mathbb{P} = \int_{\Omega_1 \times \Omega_2} \, d\mathbb{P}_1 \times d\mathbb{P}_2 \int_{\Omega_3} \chi_{P_{\Lambda, R}(E)}(\Theta) \, d\mathbb{P}_3(\Theta),
$$
where $P_{\Lambda,R}(E)$ is as in Definition~\ref{pes}.

Note that
$$
\int_{\Omega_3} \chi_{P_{\Lambda, R}(E)}(\Theta) \, d\mathbb{P}_3(\Theta) = \mathbb{P}_3(P_{\Lambda,R}(E)),
$$
and that Theorem~\ref{isoris} gives $\mathbb{P}_3(P_{R,\Theta}(E)) = 0$. Thus, the theorem follows.
\end{proof}

\begin{theorem}
Assume $\Omega_1$ is complete and $\mathbb{P}_1 = \times_{n \in I} p_n$ is such that $p_n$ are continuous measures for all $n \in I$. Let $E \in \mathbb{R}$ be fixed and suppose that $P(E)$ is measurable. Let $\mathbb{P} = \mathbb{P}_1 \times \mathbb{P}_2 \times \mathbb{P}_3$. Then one of the following options holds:
\begin{itemize}
    \item[$i)$] $\mathbb{P}(P(E)) = 0$,
    \item[$ii)$] $\mathbb{P}(P(E)) = 1$.
\end{itemize}
\end{theorem}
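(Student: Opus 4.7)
The plan is to parallel the proof of the immediately preceding theorem, but with $\Omega_1$ playing the role previously played by $\Omega_3$. Fubini's theorem will reduce the problem to computing the $\mathbb{P}_1$-measure of the $\Lambda$-section of $P(E)$, and Theorem~\ref{isornot} (rather than the null conclusion of Theorem~\ref{isoris}) will be invoked on that section. The hypothesis that every $p_n$ is continuous and $\Omega_1$ is complete is exactly what makes Theorem~\ref{isornot} applicable on these slices.

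Concretely, using the assumed measurability of $P(E)$, Fubini yields
\[
\mathbb{P}(P(E)) \;=\; \int_{\Omega_2 \times \Omega_3} \mathbb{P}_1\!\bigl(P_{R,\Theta}(E)\bigr) \, d(\mathbb{P}_2 \times \mathbb{P}_3)(R,\Theta),
\]
where $P_{R,\Theta}(E) = \{\Lambda \in \Omega_1 : (\Lambda,R,\Theta) \in P(E)\}$ is $\mathbb{P}_1$-measurable for $(\mathbb{P}_2 \times \mathbb{P}_3)$-a.e.\ $(R,\Theta)$. For each such $(R,\Theta)$, Theorem~\ref{isornot} gives $\mathbb{P}_1(P_{R,\Theta}(E)) \in \{0,1\}$, with value $1$ precisely when $P_{R,\Theta}(E) = \Omega_1$. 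Setting $A := \{(R,\Theta) : P_{R,\Theta}(E) = \Omega_1\}$, the displayed integral collapses to
\[
\mathbb{P}(P(E)) \;=\; (\mathbb{P}_2 \times \mathbb{P}_3)(A).
\]

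It remains to show $(\mathbb{P}_2 \times \mathbb{P}_3)(A) \in \{0,1\}$. I would split on whether $A$ is empty: if $A = \emptyset$, option $i)$ is immediate. Otherwise, fix $(R_0,\Theta_0) \in A$; by Remark~\ref{remad} together with Theorem~\ref{tmon}(b), (c) there is then a single $L^2$-eigenfunction, aligned with $(\cos\theta_{0,n},-\sin\theta_{0,n})^T$ at every $x_n-$, that witnesses $E \in \sigma_p(H_{\Lambda,R_0,\Theta_0})$ for \emph{every} $\Lambda$. Using the rescaling construction from the proof of Theorem~\ref{tmon}(b) --- which modifies the eigenfunction on each subinterval between successive point interactions without disturbing the projective alignment at the interaction points --- I would propagate this witness to arbitrary $(R,\Theta) \in \Omega_2 \times \Omega_3$, forcing $A = \Omega_2 \times \Omega_3$ and option $ii)$.

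The main obstacle I anticipate is precisely this last propagation step: the rescalings have to be performed simultaneously and consistently across all (possibly countably many) point interactions, matching the prescribed $(r_n,\theta_n)$ targets while preserving square-integrability, and the case where the alignment angles at successive $x_n$ are completely forced by the ODE data must be handled before declaring $A$ either empty or $(\mathbb{P}_2 \times \mathbb{P}_3)$-full. Once that is settled, everything else is a direct Fubini reduction to Theorem~\ref{isornot} mirroring the proof of the preceding theorem.
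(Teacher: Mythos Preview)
Your Fubini reduction to $\mathbb{P}(P(E)) = \int_{\Omega_2\times\Omega_3}\mathbb{P}_1(P_{R,\Theta}(E))\,d(\mathbb{P}_2\times\mathbb{P}_3)$ and the appeal to Theorem~\ref{isornot} match the paper's argument exactly. The paper, however, stops right there: once it records $\mathbb{P}_1(P_{R,\Theta}(E))\in\{0,1\}$ for each $(R,\Theta)$ it simply writes ``therefore the theorem follows'' and does \emph{not} address why the set $A=\{(R,\Theta):P_{R,\Theta}(E)=\Omega_1\}$ should have $(\mathbb{P}_2\times\mathbb{P}_3)$-measure in $\{0,1\}$. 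So you have correctly isolated a step that the paper's proof omits.

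Your proposed completion of that step, though, does not go through. The rescaling in the proof of Theorem~\ref{tmon}(b) lets one change $r_n$ while keeping the projective direction at $x_n-$ fixed, but there is no analogous device for changing $\theta_n$: by Remark~\ref{remad}, $(R,\Theta)\in A$ forces $[(u(x_n-),u'(x_n-))^T]=[(\cos\theta_n,-\sin\theta_n)^T]$, and the left-hand side is determined by the ODE and the data to the left of $x_n$, not by $\theta_n$. In fact $A$ need not be empty or all of $\Omega_2\times\Omega_3$. With $V=0$ on $[0,2]$, Dirichlet condition at $0$, Neumann condition at $2$, a single interaction at $x_1=1$, and $E=\pi^2$, one computes directly that $E\in\sigma_p(H_{\alpha_1,r_1,\theta_1})$ if and only if $\theta_1\in\{\pi/2+k\pi:k\in\mathbb{Z}\}$, independently of $(\alpha_1,r_1)$; hence $A=(0,\infty)\times\{\pi/2+k\pi\}$. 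Taking $q_1=\tfrac12\delta_{\pi/2}+\tfrac12\delta_0$ then gives $\mathbb{P}(P(E))=\tfrac12$. So without an additional hypothesis on $\mathbb{P}_3$ (e.g.\ continuity of some $q_n$, as in Theorem~\ref{isoris}), the dichotomy cannot be recovered by your propagation argument, and the ``main obstacle'' you anticipate is a genuine one.
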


\begin{proof}
Let
\[
\begin{array}{ccc} \chi_{P(E)}(\Lambda, R, \Theta) & = & \left\{ \begin{array}{cc} 1 & \mbox{ if }(\Lambda, R, \Theta) \in P(E), \\ 0 & \mbox{ if }(\Lambda, R, \Theta) \not\in P(E). \end{array}\right.
\end{array}
\]
Then,
$$
\mathbb{P}(P(E)) = \int_{\Omega_1 \times \Omega_2 \times \Omega_3} \chi_{P(E)}(\Lambda, R, \Theta) \, d\mathbb{P}.
$$
Using Fubini we have
$$
\int_{\Omega_1 \times \Omega_2 \times \Omega_3} \chi_{P(E)}(\Lambda, R, \Theta) \, d\mathbb{P} = \int_{\Omega_2 \times \Omega_3} \, d\mathbb{P}_2 \times d\mathbb{P}_3 \int_{\Omega_1} \chi_{P_{R,\Theta}(E)}(\Lambda) \, d\mathbb{P}_1(\Lambda),
$$
where $P_{R, \Theta}(E)$ is as in Definition~\ref{pes}.
Since
$$
\int_\Omega \chi_{P_{R, \Theta}(E)}(\Lambda) \, d\mathbb{P}(\Lambda) = \mathbb{P}(P_{R, \Theta}(E)),
$$
using Theorem~\ref{isornot} we conclude that either $\mathbb{P}(P_{R, \Theta}(E)) = 0$ or $\mathbb{P}(P_{R, \Theta}(E)) = 1$. Therefore the theorem follows.
\end{proof}

\begin{remark}
An analogous result holds if we assume that $\Omega_2$ satisfies the hypothesis of the theorem instead of $\Omega_1$.
\end{remark}

\subsection{Oscillation of Solutions}

The next result, Theorem~\ref{teotal}, shows that it is always possible to construct a set of point interactions $M$ such that option $ii)$ in Theorem~\ref{isornot} occurs.\\
Let $H = H_{\Lambda, R, \Theta}$ with $\Lambda = \{0\}_{n \in I}$, $R = \{1\}_{n \in I}$ and $\Theta = \{0\}_{n \in I}$ be the unperturbed operator. This operator does not depend on $M$ and $I$, and it is just the classical selfadjoint operator without interactions.



\begin{definition}[See Section XI.6 in \cite{HP}]
The equation
$$
(\tau-E) u = 0
$$
is said to be oscillatory on an interval $J$ if every solution has infinitely many zeros on $J$.\\
	
If $t = b$ is a (possibly infinite) endpoint of $J$ which does not belong to $J$, then the equation is said to be oscillatory at $t=b$ if every solution has an infinite number of zeros in $J$ accumulating at $b$.
\end{definition}	

Define
$$
\varphi(x) := arg(u'(x) + i u(x)) \qquad \qquad x \in(a,b).
$$
The zeros of the solution $u$ are given by the values of $x$ such that $\varphi(x) = k \pi$ for some integer $k$. $(\tau-E) u = 0$ is oscillatory at $b$ if and only if $\varphi(x) \rightarrow \infty$ as $x \rightarrow b$; see \cite[p.9]{DHU}.

\begin{lemma}\label{angle}
Given two consecutive zeros $t_1, t_2 \in (a,b)$ of a solution $u$ of $(\tau-E) u = 0$ and given a vector $v = (v_1,v_2)^T \in \mathbb{R}^2$, there exists a point $x_0 \in [t_1,t_2)$ such that
$$
\left[ \begin{pmatrix} u(x_0)\\u'(x_{0}) \end{pmatrix} \right] = \left[ \begin{pmatrix} v_1\\v_2 \end{pmatrix} \right].
$$
\end{lemma}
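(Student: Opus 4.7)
The plan is to use the Prüfer-type angle $\varphi(x) = \arg(u'(x) + iu(x))$ already introduced just before the lemma, show that it is continuous on $[t_1,t_2]$ and sweeps out an entire interval of length $\pi$, and then invoke Remark~\ref{linearg} to match the prescribed projective class.

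After possibly replacing $u$ by $-u$ (which preserves the zero set and each equivalence class $[(u(x),u'(x))^T] \in \RP$), I may assume $u'(t_1) > 0$. Because $u$ and $u'$ never vanish simultaneously---otherwise $u \equiv 0$ by uniqueness in Proposition~\ref{l.solprojx}---the complex-valued function $u'(x)+iu(x)$ is nowhere zero on $[t_1,t_2]$, so a continuous branch of $\varphi$ exists there, and I fix it by $\varphi(t_1) = 0$. A direct computation using $u'' = (V-E)u$ gives
$$
\varphi'(x) = \frac{(u'(x))^2 + (E - V(x))\,u(x)^2}{(u'(x))^2 + u(x)^2},
$$
so at every zero of $u$ one has $\varphi' = 1 > 0$; in particular $\varphi'(t_1) = \varphi'(t_2) = 1$.

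Next I confine $\varphi$ to $(0,\pi)$ on the open interval $(t_1,t_2)$. Since $u$ does not vanish there, $\varphi$ avoids $\pi\mathbb{Z}$ on $(t_1,t_2)$; coupled with $\varphi'(t_1) = 1 > 0$, which forces $\varphi > 0$ immediately to the right of $t_1$, continuity traps $\varphi$ in the strip $(0,\pi)$ throughout $(t_1,t_2)$. Continuity at the right endpoint then places $\varphi(t_2)$ in $[0,\pi] \cap \pi\mathbb{Z} = \{0,\pi\}$, and $\varphi(t_2) = 0$ is excluded, since $\varphi'(t_2) = 1 > 0$ would force $\varphi$ to be negative just before $t_2$, contradicting the confinement in $(0,\pi)$. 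Hence $\varphi(t_2) = \pi$, and the intermediate value theorem yields $\varphi([t_1,t_2]) = [0,\pi]$.

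To conclude, given a non-zero $v = (v_1,v_2)^T$, set $\psi := \arg(v_2 + iv_1)$ and pick $k \in \mathbb{Z}$ with $\psi + k\pi \in [0,\pi)$. By the previous step there is $x_0 \in [t_1,t_2]$ with $\varphi(x_0) = \psi + k\pi$, and because this target lies in $[0,\pi)$ while $\varphi(t_2) = \pi$, necessarily $x_0 \in [t_1,t_2)$. Applying Remark~\ref{linearg} (the two angles differ by a multiple of $\pi$) yields the desired equality in $\RP$. The main obstacle is the boundary analysis at $t_2$: both excluding $\varphi(t_2) = 0$ and trapping $\varphi$ inside the correct $\pi$-strip rely on combining the continuous branch with positivity of $\varphi'$ at zeros of $u$; everything else is a direct application of the intermediate value theorem.
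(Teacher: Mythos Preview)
Your proof is correct and follows the same strategy as the paper---Pr\"ufer angle $\varphi$, show it increases by exactly $\pi$ between consecutive zeros, intermediate value theorem, then Remark~\ref{linearg}---except that where the paper cites an external reference (Atkinson, Theorem~8.4.3(ii)) for the fact that $\varphi$ cannot approach a multiple of $\pi$ from above, you derive this directly from the explicit formula for $\varphi'$, making your argument self-contained. One minor caveat: since $V$ is only assumed to lie in $L^1_{\mathrm{loc}}$, the pointwise assertion $\varphi'(t_j)=1$ should be read as shorthand for the integral estimate $\varphi(x)-\varphi(t_j)=(x-t_j)(1+o(1))$ near $t_j$, which is what your confinement argument actually needs and which follows easily from $|u(s)|\le C|s-t_j|$ there.
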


\begin{proof}
Since $t_1$ and $t_2$ are zeros of the solution $u$, there exist $k_1, k_2 \in \mathbb{Z}$ such that $\varphi(t_1) = k_1 \pi$ and $\varphi(t_2) = k_2 \pi$. Since $\varphi$ cannot tend to a multiple of $\pi$ from above, see \cite[Theorem 8.4.3 $ii)$]{ATK}, we have $k_2 = k_1 + 1$. Since $\varphi$ is continuous, there exists $x_0 \in [t_1,t_2)$ such that
$$
\mathrm{arg}(u'(x_0) + i u(x_0)) = \mathrm{arg}(v_2 + i v_1).
$$
Therefore, by Remark~\ref{linearg},
$$
\left[ \begin{pmatrix} u(x_0)\\u'(x_{0}) \end{pmatrix} \right] = \left[ \begin{pmatrix} v_1\\v_2 \end{pmatrix} \right].
$$
\end{proof}

\begin{theorem}\label{teotal}
Let $(\tau-E) u = 0$ be oscillatory and $E \in \sigma_p(H)$. Fix $R = \{r_n\}_{n \in I}$ and $\Theta = \{\theta_n\}_{n \in I}$, where $I$ is finite or $I = \mathbb{N}$. Then there exists $M \subset \mathbb{R}$ discrete such that $P_{R,\Theta}(E) = \Omega_1$.
\end{theorem}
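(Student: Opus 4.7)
The plan is to exhibit, for every $\Lambda \in \Omega_1$, an eigenfunction of $H_{\Lambda,R,\Theta}$ at the fixed energy $E$; this immediately gives $P_{R,\Theta}(E) = \Omega_1$. The crucial observation, already visible in the proof of Theorem~\ref{tmon}(c), is the following: if a function $w$ satisfies $[(w(x_n-),w'(x_n-))^T] = [(\cos\theta_n,-\sin\theta_n)^T]$ on $\RP$, then $A_{\Lambda(n),r_n,\theta_n}(w(x_n-),w'(x_n-))^T$ is a scalar multiple of $(1,0)^T$ whose magnitude depends only on $r_n$ and on the scalar factor of $(w(x_n-),w'(x_n-))^T$, not on $\Lambda(n)$. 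Thus the whole problem reduces to producing a single piecewise solution of $(\tau - E)u = 0$ whose left limit at every $x_n \in M$ lies in the prescribed projective direction $[(\cos\theta_n,-\sin\theta_n)^T]$.

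I would build $M$ and $w$ inductively, seeded by an eigenfunction $v$ of the unperturbed $H$ at $E$. Setting $x_0 := a$ and $u_0 := v$, suppose $x_{n-1}$ and $u_{n-1}$ are defined. Since $(\tau - E)u = 0$ is oscillatory, $u_{n-1}$ has a sequence of consecutive zeros accumulating at $b$, so Lemma~\ref{angle} produces $x_n > x_{n-1}$ with $[(u_{n-1}(x_n), u_{n-1}'(x_n))^T] = [(\cos\theta_n, -\sin\theta_n)^T]$. Writing $(u_{n-1}(x_n), u_{n-1}'(x_n))^T = c_n(\cos\theta_n,-\sin\theta_n)^T$ with $c_n \neq 0$, let $u_n$ be the solution of $(\tau - E)u = 0$ on $(x_n, b)$ with initial data $(c_n r_n, 0)^T$ at $x_n+$, and set $w := v$ on $[a, x_1)$ and $w := u_n$ on $(x_n, x_{n+1})$ (with $x_{N+1} := b$ when $|I| = N$ is finite). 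By construction $(\tau - E)w = 0$ off $M$, the left boundary condition of $H_{\Lambda,R,\Theta}$ is automatic since $w = v$ near $a$, and the identity $(w(x_n+), w'(x_n+))^T = A_{\Lambda(n),r_n,\theta_n}(w(x_n-), w'(x_n-))^T$ holds for every $\Lambda(n) \in \mathbb{R}$ by the key observation. Since Lemma~\ref{angle} yields infinitely many admissible choices for each $x_n$, the sequence can be kept discrete.

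The main obstacle is the behavior of $w$ at the right endpoint $b$. If $\tau_{\Lambda,R,\Theta}$ is limit point at $b$ (the natural setting for $I = \mathbb{N}$), I would exploit the abundance of admissible $x_n$ to force $x_n \to b$ with the scalars $c_n$ and the gaps $|x_{n+1}-x_n|$ controlled well enough that $\sum_n \int_{x_n}^{x_{n+1}} |u_n|^2\,dx < \infty$. If $\tau_{\Lambda,R,\Theta}$ is regular at $b$ (which is the relevant case for finite $I$), the last piece $u_{|I|}$ on $(x_{|I|}, b]$ must satisfy the right boundary condition; since this forces $u_{|I|}$ proportional to the $H$-eigenfunction $v$, the point $x_{|I|}$ must be chosen as a critical point of $v$, of which oscillation guarantees infinitely many, and the induction is then run backward from $x_{|I|}$, solving at each step for a previous $x_n$ where the appropriate directional condition holds. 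Once the endpoint behavior is secured, $w \in D(H_{\Lambda,R,\Theta})$ with $H_{\Lambda,R,\Theta} w = Ew$ for every $\Lambda$, which is the conclusion.
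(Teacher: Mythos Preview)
Your construction departs from the paper's in a significant way. The paper never introduces iterated pieces $u_0,u_1,\ldots$; it works entirely with the single unperturbed eigenfunction $u$, chooses consecutive zeros $t_0,\dots,t_r$ of $u$, places each $x_{n_i}\in[t_{i-1},t_i)$ so that $[(u(x_{n_i}),u'(x_{n_i}))^T]=[(\cos\theta_{n_i},-\sin\theta_{n_i})^T]$ via Lemma~\ref{angle}, verifies that $A_{\alpha_{n_i},r_{n_i},\theta_{n_i}}(u(x_{n_i}-),u'(x_{n_i}-))^T$ is independent of $\alpha_{n_i}$, and concludes directly from that. There is no discussion of a right-endpoint obstruction because the paper does not thread new solutions across the successive intervals. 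Your instinct that the actual $H_\Lambda$-eigenfunction to the right of $x_1$ need not coincide with $u$ is sound, and it is precisely this that drives you to the more elaborate inductive scheme; the paper's argument is terse on exactly this point.

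That said, your own handling of the endpoint has a genuine gap. In the finite-$I$ case, running the induction backward from a critical point $x_{|I|}$ of $v$ determines the leftmost piece $u_0$ on $(a,x_1)$ by its data at $x_1-$, and nothing in your argument forces $u_0$ to be proportional to $v$, i.e.\ to satisfy the boundary condition at $a$: you have only transferred the obstruction from $b$ to $a$. (Concretely, for $|I|=1$ your conditions demand that $v$ simultaneously satisfy $v'(x_1)=0$ and $[(v(x_1),v'(x_1))^T]=[(\cos\theta_1,-\sin\theta_1)^T]$, which is impossible unless $\sin\theta_1=0$.) In the $I=\mathbb{N}$ case the claim that the freedom in choosing $x_n$ suffices to make $\sum_n\int_{x_n}^{x_{n+1}}|u_n|^2<\infty$ is asserted but not argued; each $u_n$ is a fresh solution with no a priori relation to the square-integrable $v$, so an actual estimate would be required.
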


\begin{proof}
Assume $Hu = Eu$.\\
Suppose $I$ is finite, $I = \{n_1,n_2,\dots,n_r\}$, and $\Theta = \{ \theta_{n_1}, \dots, \theta_{n_r} \}$. Let $t_0, t_1, \dots, t_r$ be $r+1$ consecutive zeros of $u$. For $n_i \in I$, let $x_{n_i}$ be such that $x_{n_i} \in [t_{i-1},t_i)$ and
$$
\left[ \begin{pmatrix} u(x_{n_i})\\u'(x_{n_i}) \end{pmatrix} \right] = \left[ \begin{pmatrix} \cos \theta_{n_i}\\-\sin \theta_{n_i} \end{pmatrix} \right].
$$
Due to Lemma~\ref{angle}, such an $x_{n_i}$ exists. Let $M = \{x_{n_i}\}_{i=1}^r$ and take $A_{\alpha_{n_i}, r_{n_i}, \theta_{n_i}} = P_{\alpha_{n_i}} H_{r_{n_i}} E_{\theta_{n_i}}$ as in Definition~\ref{matrix}, for all $n_i \in I$. Then,
$$
A_{\alpha_{n_i}, r_{n_i}, \theta_{n_i}} \begin{pmatrix} u(x_{n_i}-)\\u'(x_{n_i}-) \end{pmatrix} = A_{\alpha_{n_i}, r_{n_i}, \theta_{n_i}} \begin{pmatrix} \cos \theta_{n_i} \\ -\sin \theta_{n_i} \end{pmatrix} = r_{n_i} \begin{pmatrix} 1  &\alpha_{n_i} \\ 0 & 1 \end{pmatrix} \begin{pmatrix} 1 \\ 0 \end{pmatrix} = r_{n_i} \begin{pmatrix} 1 \\ 0 \end{pmatrix}.
$$
From the definition of $H_\Lambda$, $E$ must be an eigenvalue of $H_\Lambda$ for all $\Lambda \in \Omega_1$, and therefore $P_{R, \Theta}(E) = \Omega_1$.\\
Suppose $I = \mathbb{N}$. Let us assume that there are infinitely many zeros of $u$, increasingly enumerated by $t_0, t_1, \dots$. Let $\Theta = \{\theta_n\}_{n \in I}$. Let $x_{1}$ be such that $x_{1} \in [t_{0},t_1)$ and
$$
\left[ \begin{pmatrix} u(x_{1})\\u'(x_{1}) \end{pmatrix} \right] = \left[ \begin{pmatrix} \cos \theta_{1}\\-\sin\theta_{1} \end{pmatrix} \right].
$$

As above let $x_{2}$ be such that $x_{2} \in [t_{1},t_2)$ and
$$
\left[ \begin{pmatrix} u(x_{2})\\u'(x_{2}) \end{pmatrix} \right] = \left[ \begin{pmatrix} \cos \theta_{2}\\-\sin \theta_{2} \end{pmatrix} \right].
$$


In this way we get a sequence $M := \{x_{n}\}_{n \in I}$. Take $A_{\alpha_{n}, r_{n}, \theta_{n}} = P_{\alpha_{n}} H_{r_{}} E_{\theta_{n}}$ as in Definition~\ref{matrix}. Then,
$$
A_{\alpha_{n}, r_{n}, \theta_{n}} \begin{pmatrix} u(x_{n}-) \\ u'(x_{n}-) \end{pmatrix} = A_{\alpha_{n}, r_{n}, \theta_{n}} \begin{pmatrix} \cos \theta_{n} \\ -\sin \theta_{n} \end{pmatrix} = r_{n} \begin{pmatrix} 1 & \alpha_{n} \\ 0 & 1 \end{pmatrix} \begin{pmatrix} 1 \\ 0 \end{pmatrix} = r_{n} \begin{pmatrix} 1 \\ 0 \end{pmatrix}.
$$
From the definition of $H_\Lambda$, $E$ must be an eigenvalue of $H_\Lambda$ for all $\Lambda \in \Omega_1$, and therefore $P_{R,\Theta}(E) = \Omega_1$.
\end{proof}

\bibliographystyle{plain}
\bibliography{biblio}

\end{document}